\def\C{{\mathbb{C}}}
\def\d{\,{\mathrm{d}}}
\def\Hh{{\mathcal{H}}}
\def\Ll{{\mathcal{L}}}
\def\N{{\mathbb{N}}}
\def\R{{\mathbb{R}}}
\def\T{{\mathbb{T}}}
\def\Ww{{\mathcal{W}}}
\def\Z{{\mathbb{Z}}}
\newcommand{\norm}[1]{{\left\|{#1}\right\|}}
\newcommand{\abs}[1]{{\left|{#1}\right|}}
\newcommand{\scal}[1]{{\left\langle{#1}\right\rangle}}
\newtheorem{theorem}{Theorem}[section]
\newtheorem{proposition}[theorem]{Proposition}
\newtheorem{corollary}[theorem]{Corollary}
\theoremstyle{definition}
\newtheorem{definition}[theorem]{Definition}
\newtheorem{example}[theorem]{Example}
\newtheorem{remark}[theorem]{Remark}
\newtheorem{conjecture}[theorem]{Conjecture}
\begin{document}

\title{Weaving Riesz Bases}

\let\thefootnote\relax\footnote{2020 {\it Mathematics Subject Classification:} Primary 42C15, 47A15,  94A20.

{\it Keywords:} Frame-tuples, similarity, invariant subspaces, reducing subspaces, shift operators, synthesis operator.

The research of the authors is partially supported by grants: UBACyT 20020170100430BA, PICT 2014-1480 (ANPCyT) and CONICET PIP 11220150100355. The third author was supported by the postdoctoral fellowship 10520200102714CO of CONICET.}
\thanks{OrcID[C. Cabrelli]: 0000-0002-6473-2636}
\thanks{OrcID[U. Molter]: 0000-0002-2928-9480}

\author[C. Cabrelli, U. Molter, F. Negreira]{C. Cabrelli, U. Molter, F. Negreira}
\address{ Departamento de Matemática, Universidad de Buenos Aires, Instituto de Matemśtica ``Luis Santaló'' (IMAS-CONICET-UBA), Buenos Aires, Argentina}
\email{carlos.cabrelli@gmail.com}
\email{umolter@gmail.com}
\email{fnegreira@dm.uba.ar}

\begin{abstract}
This paper explores woven frames in separable Hilbert spaces with an initial focus on the finite-dimensional case. 
We begin by simplifying the problem to bases, for which we obtain a unique characterization.
We establish a condition that is both necessary and sufficient for vector reconstruction, which is applicable to Fourier matrices. 
Furthermore, we show that these characterizations are still valid in the infinite dimensional case, for Riesz bases. 
Finally, we obtain several results for weaving Riesz bases of translations.
\end{abstract}

\dedicatory{To our dear friend Charly, in appreciation of many years of friendship, enriching mathematical conversations and beautiful nights at the opera.}

\maketitle

\section{Introduction}
Let $\Hh$ be a separable Hilbert space and $\{v_i\}_{i\in I}$ a countable subset of vectors of $\Hh$. We say that $\{v_i\}_{i\in I}$ is a frame in $\Hh$ if there exists constants $A,B>0$ such that
\begin{equation*}
A\norm{f}^2\leqslant\sum_{i\in I}\abs{\scal{f,v_i}}^2\leqslant B\norm{f}^2
\end{equation*}
holds for all $f\in\Hh$.

Now for the same index set $I$ consider another frame $\{w_i\}_{i\in I}$. We say that $\{v_i\}_{i\in I}$ and $\{w_i\}_{i\in I}$ are {\it woven} if for every $J \subset I$ the sequence $\{v_i\}_{i\in J} \cup \{w_i\}_{i\in I\setminus J}$ is a frame of $\Hh$.
This notion naturally extends to the case of $n$ frames, where partitions of the index set $I$ into $n$ subsets are considered.

Originally introduced in \cite{weaving}, the motivation for studying woven frames stems from questions in distributed signal processing, with potential applications in areas such as wireless sensor networks and signal preprocessing.  Subsequent research has broadened the concept, applying it to generalized frames such as for example Shauder bases \cite{CFL16}, g-frames \cite{gframes}, fusion frames \cite{fusionf} and K-frames \cite{Kframes} and multi-window Gabor frames \cite{multiw}.

In this note we focus first on woven finite frames (Section \ref{finite}), specifically examining the scenario with two frames.  We limit our investigation to the case in which the frames are Riesz bases, exploiting the fact that in finite dimensions, every frame contains a basis. 
Recall that a Riesz basis in a Hilbert space is the image of an orthonormal basis under a bounded, invertible operator.
Additionally, we restrict our study to woven frames 'up to permutation', where two bases are considered woven if there exists a permutation of one that makes them woven.

Even in this seemingly straightforward case, the complexity of the problem becomes apparent, giving rise to a multitude of challenging aspects and numerous unresolved questions. 

We provide various illustrative examples to shed light on the intricacies involved and comment on some of the inquiries that remain open.

We  establish necessary and sufficient conditions, along with several sufficient conditions, for bases to be woven. We  analyze the case of bases of complex exponentials  and establish connections with the Fourier transform on  finite groups. In particular, we prove  that when $p$ is prime we can always  reconstruct a vector $x \in \C^p$  or its Fourier transform $\widehat{x},$ from any weaving of the components of $x$ and $\widehat{x}.$ We also propose a conjecture regarding the minors of a Fourier matrix.
We extend our findings to the infinite-dimensional case in Section \ref{infinite}, providing new necessary and sufficient conditions for woven Riesz bases in this context. 

Finally, in Section \ref{SIS} we  consider the case of Riesz bases  of translations in shift-invariant spaces (SIS). We study two problems. First, we give necessary and sufficient conditions
in order that two sets of Riesz generators of a SIS can be woven without losing the property of being a Riesz set of generators. For this characterization we
use the fiberization techniques of the range function associated to the SIS \cite{bow}.
In the second problem, we provide sufficient conditions for two Riesz bases of translates to be woven. As a particular example we consider the condition in a Paley-Wiener space.

Throughout this text we will always consider Hilbert spaces over $\C$ as a field but all results -excepting the ones in $\S$\ref{Fourier-m}- are valid over $\R$ as well.

In this article we will use a fact that is observed in \cite{weaving}: The property of being woven is preserved by bounded invertible linear maps. This is, if $\Hh_1$ and $\Hh_2$ are Hilbert spaces and $T\in\Ll(\Hh_1,\Hh_2)$  is invertible then two frames in $\Hh_1$ are woven if and only if their images by $T$ are woven in $\Hh_2$.

\section{Finite case}\label{finite}
As one might suspect, weaving frames are highly susceptible to order. For example if $\{v_i\}_{i\in I}$ is an orthonormal basis then any of its non-trivial permutations are not woven with $\{v_i\}_{i\in I}$ (in fact not even complete). This motivates the following variation of the initial question: when are two given frames $\{v_i\}_{i\in I}$ and $\{w_i\}_{i\in I}$ woven `up to permutations'? Since we have not found this definition elsewhere let us precise what we mean by this.
\begin{definition}
We say that two frames $\{v_i\}_{i\in I}$ and $\{w_i\}_{i\in I}$ indexed by the same set $I$ in the same Hilbert space are {\it  woven up to permutations}, if there exists a permutation $\sigma$ of the index set $I$, such that  $\{v_{\sigma(i)}\}_{i\in I}$ is woven with $\{w_i\}_{i\in I}$.
\end{definition}

In finite dimensional spaces, frames are just generators of the space, so if $\{v_1, \dots, v_m\}$ and $\{w_1, \dots, w_m\}$ are frames in a vector space $V$ of dimension $n$, then there exist two permutations $\sigma_1,\sigma_2$ of $\{1, \dots, m\}$ such that $\{v_{\sigma_1(1)}, \dots,  v_{\sigma_1(n)}\}$ and $\{w_{\sigma_2(1)}, \dots, w_{\sigma_2(n)}\}$ are bases. 

We have the immediate but surprising property.
\begin{proposition} Let
$\{v_1, \dots, v_m\}$ and $\{w_1, \dots, w_m\}$ be frames in a vector space $V$ of dimension $n$, and let $\sigma_1,\sigma_2$ be permutations of $\{1, \dots, m\}$ such that $\{v_{\sigma_1(1)}, \dots,  v_{\sigma_1(n)}\}$ and $\{w_{\sigma_2(1)}, \dots, w_{\sigma_2(n)}\}$ are bases. 

If  $\{v_{\sigma_1(1)}, \dots, v_{\sigma_1(n)}\}$ and $\{w_{\sigma_2(1)}, \dots, w_{\sigma_2(n)}\}$  are woven, then denoting by $\sigma :=\sigma_1^{-1}\sigma_2$ we  have that $\{v_1, \dots,  v_m\}$ and $\{w_{\sigma(1)}, \dots,  w_{\sigma(m)}\}$ are woven. 
\end{proposition}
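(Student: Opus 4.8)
The plan is to reduce the general statement to the weaving hypothesis for the two bases by a counting argument: in finite dimensions a frame is the same thing as a spanning set, and any set of vectors that contains a basis of $V$ is a frame of $V$, so it suffices to exhibit, inside each ``mixed'' family obtained from $\{v_i\}_{i=1}^m$ and $\{w_{\sigma(i)}\}_{i=1}^m$, a copy of one of the woven bases provided by the hypothesis.

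Concretely, I would fix an arbitrary $K\subseteq\{1,\dots,m\}$ and consider the mixed family $F_K:=\{v_i\}_{i\in K}\cup\{w_{\sigma(i)}\}_{i\in\{1,\dots,m\}\setminus K}$; it must be shown that $F_K$ spans $V$. First, record which indices actually occur in $F_K$: the $v$-indices are exactly $K$, and, since $\sigma$ is a bijection, the $w$-indices are exactly $\{1,\dots,m\}\setminus\sigma(K)$. Next, set $S:=\{\,j\in\{1,\dots,n\}:\sigma_1(j)\in K\,\}$ and consider $B_S:=\{v_{\sigma_1(j)}\}_{j\in S}\cup\{w_{\sigma_2(j)}\}_{j\in\{1,\dots,n\}\setminus S}$, which is a basis of $V$ by the weaving hypothesis on the two given bases. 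The heart of the argument is the claim $B_S\subseteq F_K$: for $j\in S$ this holds because $\sigma_1(j)\in K$, while for $j\notin S$ one has $\sigma_1(j)\notin K$ and one must check that $\sigma_2(j)$ is among the $w$-indices of $F_K$, i.e.\ that $\sigma_2(j)\notin\sigma(K)$ — this is precisely where the relation between $\sigma$, $\sigma_1$ and $\sigma_2$ enters, through a one-line permutation computation. Granting the claim, $F_K$ contains the basis $B_S$ and hence spans $V$; since $K$ was arbitrary, $\{v_i\}_{i=1}^m$ and $\{w_{\sigma(i)}\}_{i=1}^m$ are woven.

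The only genuine obstacle is the bookkeeping with the three permutations: one has to verify that the chosen $\sigma$ really makes the ``$i$-th slot'' of the weaving of $\{v_i\}_i$ with $\{w_{\sigma(i)}\}_i$ correspond, for $i$ in the range of $\sigma_1$, to the ``$\sigma_1^{-1}(i)$-th slot'' of the weaving of the two bases; equivalently, that for each $j\in\{1,\dots,n\}$ exactly one of $v_{\sigma_1(j)}$ and $w_{\sigma_2(j)}$ lies in $F_K$, and that the choices dictated by $K$ are mutually consistent, so that they define an honest subset $S$ of slots. In carrying this out it pays to be careful about the order of composition in $\sigma=\sigma_1^{-1}\sigma_2$, since reading composition left-to-right versus right-to-left interchanges $\sigma_1^{-1}\sigma_2$ and $\sigma_2\sigma_1^{-1}$, and only one of these realizes the slot correspondence; once the convention is fixed the computation is immediate. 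All remaining points — that a spanning set of vectors in $V$ is a frame, and that a set containing a frame is a frame — are automatic since $\dim V=n<\infty$.
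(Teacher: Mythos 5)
Your argument is correct, and it is essentially the proof the paper has in mind: the paper states this proposition without any proof at all (it calls the property ``immediate''), so your write-up supplies exactly the missing content. The reduction is the right one: for each $K\subseteq\{1,\dots,m\}$ the set $S=\{j\le n:\sigma_1(j)\in K\}$ picks out a weaving $B_S$ of the two bases, and the deferred ``one-line computation'' is simply $\sigma_2(j)\in\sigma(K)\iff \sigma_2(j)\in\sigma_2(\sigma_1^{-1}(K))\iff \sigma_1(j)\in K$, which fails for $j\notin S$, so $B_S\subseteq F_K$ and $F_K$ spans $V$; finiteness makes spanning equivalent to being a frame. Note that this computation forces the left-to-right reading of the product, i.e.\ $\sigma=\sigma_2\circ\sigma_1^{-1}$ as a map; you correctly flag that with the opposite (right-to-left) convention the paper's formula $\sigma_1^{-1}\sigma_2$ would name the wrong permutation, and this is a genuine ambiguity in the paper's statement rather than a gap in your proof.
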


We do not know however if the converse statement is true, that is: given two finite frames which are woven, do they contain woven bases? 

Also, let us remark at this point, that there exist bases for which no permutation makes them woven:
\begin{example}
In $\C^3$ take $\{e_1,e_2,e_3\}$ to be the canonical basis and $\{e_1+e_2,e_2+e_3,e_1+e_2+e_3\}$  the other basis. There exists no permutation that makes these bases woven. Actually, since being woven is an invariant property under isomorphisms, in an arbitrary $3$-dimensional vector space $V$ {\it any} base $\{v_1,v_2,v_3\}$ is not woven with $\{v_1+v_2,v_2+v_3,v_1+v_2+v_3\}$
\end{example}

This leads us to study when two bases in a finite dimensional space are woven. As we will see, this can be characterized through the change of bases matrix.

\subsection{A necessary and sufficient condition}

\begin{definition}
Given two bases $\{v_1, \dots, v_n\}$ and $\{w_1, \dots, w_n\}$ in a finite dimensional real (or complex) vector space, we say that $A=(a_{i,j})^n_{i,j=1}$ is the change of basis matrix from $\{v_1, \dots, v_n\}$ to  $\{w_1, \dots, w_n\}$ if 
\begin{equation*}
A^t\left(\begin{array}{l}v_1\\\hspace*{0.2em}\vdots\\v_n\end{array}\right)=\left(\begin{array}{l}w_1\\\hspace*{0.2em}\vdots\\w_n\end{array}\right)
\end{equation*}
in the sense that for each $i=1, \dots, n$ we have
\begin{equation*}
w_i=a_{1,i}v_1 + \dots + a_{n,i}v_n.
\end{equation*}
\end{definition}

Particularly, we will be looking at some sub-matrices  of this matrix, which we call {\it central}.

\begin{definition}
The square {\it central} sub-matrices  of a real or complex matrix $A=(a_{i,j})^n_{i,j=1}$ are those that can be written as $(a_{j,k})_{j,k\in J}$ where $J$ is any subset of $\{1, \dots, n\}$.
\end{definition}

We have the following theorem.
\begin{theorem}\label{elteo}
Let $V$ be a vector space of finite dimension, Let $\{v_1, \dots, v_n\}$, $\{w_1, \dots, w_n\}$ two bases of $V$ and denote by $A$ the change of basis matrix. Then $\{v_1, \dots, v_n\}$ and $\{w_1, \dots, w_n\}$ are woven if and only if all central sub-matrices  of $A$ are invertible.
\end{theorem}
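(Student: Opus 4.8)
The plan is to reduce the weaving condition to a statement about ranks of certain submatrices. First I would use the invariance of weaving under bounded invertible linear maps (recalled in the introduction) to normalize the situation: applying the operator that sends $\{v_1,\dots,v_n\}$ to the canonical basis $\{e_1,\dots,e_n\}$ of $\C^n$, we may assume $v_i = e_i$ and $w_i = \sum_j a_{j,i} e_j$, i.e. the $w_i$ are the columns of $A$. Now fix a partition $I = J \sqcup J^c$ with $J \subset \{1,\dots,n\}$, and consider the mixed system $\{e_i\}_{i\in J} \cup \{w_i\}_{i\in J^c}$. Being a frame of a finite-dimensional space is equivalent to being a spanning set, hence (since the cardinality is exactly $n$) to being a basis, hence to the nonvanishing of the determinant of the matrix $M_J$ whose columns are these $n$ vectors.

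The key computation is to identify $\det M_J$. Writing $M_J$ in block form relative to the splitting of coordinates into those indexed by $J$ and those indexed by $J^c$: the columns indexed by $J$ (the $e_i$, $i\in J$) contribute an identity block in the $J$-rows and a zero block in the $J^c$-rows; the columns indexed by $J^c$ (the $w_i$) contribute the submatrix $(a_{j,i})_{j\in J, i\in J^c}$ in the $J$-rows and the submatrix $(a_{j,i})_{j\in J^c, i\in J^c}$ in the $J^c$-rows. After reordering rows and columns so that the $J$-indices come first, $M_J$ is block upper (or lower) triangular with diagonal blocks the identity of size $|J|$ and the matrix $(a_{j,i})_{j,i\in J^c}$. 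Therefore $\det M_J = \pm \det\big( (a_{j,i})_{j,i \in J^c} \big)$, which is exactly the central submatrix of $A$ indexed by $J^c$.

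Consequently, the mixed system for the partition $J \sqcup J^c$ is a basis (equivalently, spanning, equivalently a frame) if and only if the central submatrix of $A$ indexed by $J^c$ is invertible. Running over all $J \subset \{1,\dots,n\}$ — equivalently over all complements — we get that $\{v_i\}$ and $\{w_i\}$ are woven if and only if every central submatrix of $A$ is invertible (the cases $J = \emptyset$ and $J = \{1,\dots,n\}$ reduce to the invertibility of $A$ itself and of the empty matrix, both automatic). I should also note that in finite dimensions one need not separately check the upper frame bound: any finite spanning set is automatically a frame, and conversely a frame spans, so the frame condition for each mixed family collapses cleanly to the determinant condition.

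I do not anticipate a serious obstacle here; the only point requiring a little care is the bookkeeping of the row/column permutation that brings $M_J$ to block-triangular form and the verification that it only affects $\det M_J$ by a sign — which is irrelevant since we only care about vanishing versus nonvanishing. The conceptual content is entirely in the block-triangular identity $\det M_J = \pm \det (a_{j,i})_{j,i\in J^c}$, after which the equivalence is immediate.
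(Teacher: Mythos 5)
Your proposal is correct and follows essentially the same route as the paper: reduce via the isomorphism $v_i\mapsto e_i$ to the pair $\{e_i\}$, $\{a_i\}$ in $\C^n$, observe that each weaving is a frame iff it is a basis iff the corresponding mixed matrix has nonzero determinant, and identify that determinant with a central minor of $A$ (the paper does this by Laplace expansion along the canonical columns, you by block-triangularization — the same computation). Your bookkeeping is in fact slightly more careful than the paper's, since you correctly note that the partition $J\sqcup J^c$ produces the central submatrix indexed by $J^c$, which is immaterial as $J$ ranges over all subsets.
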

\begin{proof}
Define $T:V\to\C^n$ as the isomorphism such that $Tv_j=e_j$ for all $j=1,\dots,n$ where
$\{e_1,\dots,e_n\}$ is the canonical base of $\C^n$.

As $A=(a_{i,j})^n_{i,j=1, \dots, n}$ is the change of basis matrix from $\{v_1, \dots, v_n\}$ to $\{w_1, \dots, w_n\}$ then for each $i=1, \dots, n$ we have
\begin{equation*}
w_i=a_{1,i}v_1+ \dots + a_{n,i}v_n.
\end{equation*}
Thus $Tw_i$ is the $i^{th}$-column of $A$, $a_i=(a_{1,i}, \dots, a_{n,i})$. Then, as being woven is invariant under isomorphisms, $\{v_1, \dots, v_n\}$ and $\{w_1, \dots, w_n\}$ are woven in $V$ if and only if $\{e_1, \dots, e_n\}$ and $\{a_1, \dots, a_n\}$ are woven in $\C^n$. 

But $\{e_1, \dots, e_n\}$ and $\{a_1, \dots, a_n\}$ are woven if for any subset $J\subset\{1, \dots, n\}$ the set $\{e_i\}_{i\in J}\cup\{a_i\}_{i\in I\setminus J}$ is a basis. In other words, the matrix $A(J)$ in which the columns of $A$ indexed in $J$ are changed by the corresponding canonical vectors, must be invertible. 

Calculating the determinant using the expansion by precisely those rows indexed by elements in $J$ we see that $\det(A(J))$ is the determinant of the central sub-matrix of $A$, $(a_{i,j})_{i,j\in J}$. And all such determinants can be obtained by choosing an appropriate subset of $\{1, \dots,  n\}$.
\end{proof}

\subsection{The class $\Ww$}

\begin{definition}
We define the class $\Ww$ as the set of square matrices   with entries in $\C$ such that all its central sub-matrices   are invertible. The subclass $\Ww_n$ are the elements of $\Ww$ of size $n\times n$.
\end{definition}

\begin{corollary}\label{final}
Assume that $A\in\Ww_n$ and let $J\subset\{1, \dots, n\}$. Then any  $x\in\C^n$ can be recovered from the samples $Y_J(x)=\{(Ax)(j)\}_{j\in J}\cup\{x(j)\}_{j\in I\setminus J}$ (i.e. the map $x\mapsto Y_J(x)$ is injective.) Moreover, the converse result is also true, this is, if for a given $A\in\C^{n\times n}$ the map $x\mapsto Y_J(x)$ is injective for all $J\subset\{1, \dots, n\}$ then $A\in\Ww_n$.
\end{corollary}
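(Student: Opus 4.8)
The plan is to reduce the statement of the corollary to Theorem~\ref{elteo} by recognizing that the sampling map $x\mapsto Y_J(x)$ is, up to a permutation of coordinates, nothing but the synthesis map against the weaving system $\{e_i\}_{i\in J}\cup\{a_i\}_{i\in I\setminus J}$, where $a_i$ denotes the $i$-th column of $A$. First I would make this identification precise: writing $Y_J(x)$ out coordinatewise, the $j$-th sample for $j\in J$ is $(Ax)(j)=\langle x,\overline{a^{(j)}}\rangle$ where $a^{(j)}$ is the $j$-th row of $A$, and the $j$-th sample for $j\notin J$ is $x(j)=\langle x,e_j\rangle$. Hence $x\mapsto Y_J(x)$ is the analysis operator of the family $R_J:=\{\overline{a^{(j)}}\}_{j\in J}\cup\{e_j\}_{j\notin J}$, and this map is injective if and only if $R_J$ spans $\C^n$, i.e. is a basis (it has exactly $n$ elements). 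So injectivity of $x\mapsto Y_J(x)$ for every $J$ is equivalent to: for every $J$, the matrix whose rows are the indicated vectors is invertible.

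Next I would connect this to the weaving condition. Transposing, the matrix with rows $\{\overline{a^{(j)}}\}_{j\in J}\cup\{e_j\}_{j\notin J}$ is invertible if and only if its transpose is, and the transpose is, up to complex conjugation of the whole matrix (which does not affect invertibility), exactly the matrix $A(J^c)$ from the proof of Theorem~\ref{elteo} built from $A^t$ — that is, the matrix obtained from $A^t$ by replacing the columns indexed in $I\setminus J$ with the corresponding canonical vectors. Invertibility of all such matrices, as $J$ ranges over subsets of $\{1,\dots,n\}$, is by the determinant-expansion argument in that proof equivalent to all central sub-matrices of $A^t$ being invertible, which is the same as all central sub-matrices of $A$ being invertible, i.e. $A\in\Ww_n$. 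Alternatively, and perhaps more cleanly for exposition, I would simply invoke Theorem~\ref{elteo} directly: taking $V=\C^n$, $\{v_i\}=\{e_i\}$ the canonical basis, and $\{w_i\}$ the columns of $A^t$ (so that $A$ is literally the change of basis matrix), the system $\{e_i\}_{i\in J}\cup\{w_i\}_{i\notin J}$ is a basis for all $J$ iff $A\in\Ww_n$; and a basis of $n$ vectors in $\C^n$ is exactly the condition making the associated analysis/synthesis map a bijection.

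For the forward implication, assuming $A\in\Ww_n$: Theorem~\ref{elteo} gives that $\{e_i\}_{i\in J}\cup\{a_i\}_{i\notin J}$ is a basis of $\C^n$, hence the corresponding analysis map is injective; translating through the coordinate identification above, $x\mapsto Y_J(x)$ is injective. For the converse, assuming $x\mapsto Y_J(x)$ is injective for all $J$: run the same identification backwards to conclude each weaving system is a basis, hence by Theorem~\ref{elteo} all central sub-matrices of $A$ are invertible, i.e. $A\in\Ww_n$. I expect the only real subtlety — and it is minor — to be keeping the bookkeeping straight between rows and columns and the harmless conjugation: the sampling functional $(Ax)(j)$ pairs $x$ with the conjugate of the $j$-th row of $A$, so one must be careful that invertibility statements are transpose-invariant and conjugation-invariant (both obvious, since $\det(A^t)=\det A$ and $\det(\overline A)=\overline{\det A}$), and that "central sub-matrices of $A$" and "central sub-matrices of $A^t$" are the same collection indexed by the same $J$. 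Everything else is a direct application of Theorem~\ref{elteo}.
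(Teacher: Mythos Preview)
Your argument is correct and is essentially the same as the paper's. The paper is a bit more direct: it simply defines a matrix $A(J)$ with $Y_J(x)=A(J)x$ (replacing the rows indexed by $I\setminus J$ with the corresponding $e_j$) and observes via Laplace expansion that $\det A(J)$ is a central sub-determinant of $A$, which spares the analysis-operator detour through Theorem~\ref{elteo} and the row/column/conjugation bookkeeping you (rightly) flag as the only subtlety.
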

\begin{proof}
Given $J\subset\{1, \dots, n\}$ let $A(J)$ be the matrix whose $j^{\rm th}$-row is equal to $Ae_j$ when $j\in J$ and $e_j$ when $j\in I\setminus J$. Thus, $Y_J(x)=A(J)x$.

Assume now $A=(a_{i,j})_{i,j=1}^n\in\Ww_n$. Then, as $\det A(J)=\det(a_{i,j})_{i,j\in J}$, $A(J)$ is invertible and we can therefore write $x=A(J)^{-1}(A(J)(x))$.

Conversely, if $x\mapsto Y_J(x)$ is injective then multiplication by $A(J)$ is also an injective map. Furthermore, as $A(J)$ is a square matrix, then it must be invertible. Thus $\det(a_{i,j})_{i,j\in J}=\det A(J)\neq0$.
\end{proof}

\begin{remark}
More generally, if $A$ and $B$ are invertible matrices  in $\C^n$, then the rows of $A$  are woven with the rows of $B$ if and only if for any $J\subset\{1, \dots, n\}$ any vector $x\in\C^n$ can be uniquely recovered from  $\{(Ax)(j): j\in J\}\cup\{(Bx)(j): j\in I\setminus J\}$. This can also be seen by applying the generalized Cramer's rule, see e.g. \cite{Cramers}.
\end{remark}

Just like with other properties, it is natural to wonder how often we come across matrices in $\Ww$? Or, more rigorously, is the set $\Ww$ generic in some sense (topologically, probabilistically)? In fact, since the determinant is a polynomial function on the entries of a matrix, matrices in  $\Ww_n$ are in the complement of a manifold of dimension strictly lower than $n\times n$. This is, a Zariski open set, see e.g. \cite{zariski}.

\begin{proposition}
Given a natural number $n$, the set of matrices  in $\Ww_n$ is a Zariski (dense) open set within the matrices  in $\C^{n\times n}$.
\end{proposition}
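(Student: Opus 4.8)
The plan is to show that $\Ww_n$ is the complement of the zero set of a single nonzero polynomial in the $n^2$ entries of the matrix, and that this zero set is a proper algebraic subvariety. First I would fix coordinates: regard a matrix $A = (a_{i,j})$ as a point in $\C^{n\times n} \cong \C^{n^2}$, and for each nonempty subset $J \subset \{1,\dots,n\}$ let $p_J(A) := \det\big((a_{i,j})_{i,j\in J}\big)$, which is a polynomial in the entries $a_{i,j}$. By definition, $A \in \Ww_n$ if and only if $p_J(A) \neq 0$ for every nonempty $J$; equivalently, setting $P := \prod_{\emptyset \neq J \subseteq \{1,\dots,n\}} p_J$, we have $A \in \Ww_n \iff P(A) \neq 0$. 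Hence $\Ww_n = \C^{n^2} \setminus Z(P)$ is Zariski open.

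The one thing that must be checked is that $P$ is not the zero polynomial, so that $Z(P)$ is a proper subvariety (otherwise the statement would be vacuous, or worse, false). For this I would simply exhibit one matrix all of whose central sub-matrices are invertible: any generic choice works, but a clean explicit example is a Vandermonde-type matrix, or one can invoke the fact (essentially the content of the preceding sections) that such matrices exist — e.g. a suitable perturbation of the identity, or a Cauchy matrix $a_{i,j} = 1/(x_i + y_j)$ with the $x_i, y_j$ chosen so that $x_i + y_j \neq 0$, all of whose square sub-matrices (central or not) have nonzero determinant by the Cauchy determinant formula. Either way, $P \not\equiv 0$, so $Z(P)$ is a proper Zariski-closed set, i.e. contained in a hypersurface, which has (complex) dimension $n^2 - 1 < n^2$.

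Finally, for Zariski density: over $\C$ (indeed over any infinite field), the complement of the zero set of a nonzero polynomial is Zariski dense in $\C^{n^2}$, because $\C^{n^2}$ is irreducible as an affine variety, so any nonempty Zariski open subset is dense. Concretely, if a polynomial $Q$ vanishes on $\C^{n^2}\setminus Z(P)$, then $PQ$ vanishes identically on $\C^{n^2}$, hence $PQ \equiv 0$, and since $\C[a_{i,j}]$ is an integral domain and $P \not\equiv 0$, we get $Q \equiv 0$; thus the Zariski closure of $\Ww_n$ is all of $\C^{n^2}$. This yields both assertions: $\Ww_n$ is Zariski open and Zariski dense.

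The main obstacle — really the only non-formal point — is the nonvanishing of $P$, i.e. producing a matrix in $\Ww_n$. I expect this to be genuinely easy (the Cauchy matrix argument dispatches it in one line, and it also matches the ``generic'' intuition already noted in the text before the proposition), so the proof is short: identify $\Ww_n$ with the non-zero locus of the polynomial $P$, note $P\not\equiv 0$ via an explicit example, and invoke irreducibility of affine space for density.
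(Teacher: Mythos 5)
Your proof is correct and follows the same approach as the paper: the complement of $\Ww_n$ is the union of the zero sets of the finitely many central-minor determinant polynomials, hence Zariski closed. You go further than the paper's own (very terse) proof by explicitly verifying that the product polynomial $P$ is not identically zero and deducing density from the irreducibility of affine space --- both points the paper leaves implicit; note also that the identity matrix already lies in $\Ww_n$ (every central sub-matrix of $I$ is an identity matrix), so the Cauchy-matrix construction, while valid, is unnecessary for exhibiting a witness.
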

\begin{proof}
For each $J\subset\{1, \dots, n\}$ consider the function $P_J:\C^{n\times n}\to\C$ given by $P_JA=\det(a_{j,k})_{j,k\in J}$ where  $A=(a_{j,k})_{j,k=1}^n$. Clearly, $P_J$ is a polynomial on $n\times n$ variables and, by definition, $\Ww_n^c=\cup_JP_J^{-1}(0)$. This means that $\Ww_n^c$ is a Zariski closed set.
\end{proof}

Viewed this way, the subclass $\Ww_n$ is a huge subset of $\C^{n\times n}$. Finding a structure for this set, however, can be tricky. It is not a vector space with the usual sum since clearly $0\notin\Ww_n$, nor is it closed by multiplication e.g. $A=\left(\begin{array}{rr}1&-1\\1&1\end{array}\right)\in\Ww_2$ but $A^2=\left(\begin{array}{rr}0&-2\\2&0\end{array}\right)\notin\Ww_2$. It has, nonetheless, some symmetries and other properties that we list below.

\begin{proposition}\label{propertiesf}
Let $A$ be a complex square matrix.
\begin{enumerate}
\item $A\in\Ww$ if and only if $A$ is invertible and $A^{-1}\in\Ww$. 
\item $A\in\Ww\Leftrightarrow A^t \in\Ww\Leftrightarrow A^*\in\Ww$. 
\item If $A \in\Ww_n$ and $B\in\C^{n\times n}$ is either a diagonal invertible or a permutation matrix then $B^*AB\in\Ww_n$.

\item If $A$ is symmetric and either positive definite or negative definite, then $A\in\Ww$.

\item If $A$ is lower or upper triangular and invertible then $A\in\Ww$.

\item If $A \in \Ww$ and $B$ is a central sub-matrix of $A$, then $B$ is in $\Ww$.
\end{enumerate}
\end{proposition}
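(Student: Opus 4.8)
The plan is to treat each of the six items separately, since they are of quite different natures, and to lean as much as possible on Theorem~\ref{elteo} and on Corollary~\ref{final} to avoid recomputing determinants by hand.

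For item (1), observe that the central sub-matrices of $A$ indexed by $J$ have determinant equal to $\det A(J)$, the matrix obtained from $A$ by replacing the columns in $J^c$ with the corresponding canonical vectors; so $A\in\Ww$ exactly when every such $A(J)$ is invertible. When $A$ is invertible, I would multiply on the left by $A^{-1}$ and note that $A^{-1}A(J)$ is precisely the matrix whose columns indexed by $J^c$ are the columns of $A^{-1}$, i.e. the analogue of $(A^{-1})(J^c)$; hence invertibility of all $A(J)$ is equivalent to invertibility of all $(A^{-1})(K)$, which is $A^{-1}\in\Ww$. One must also check that $A\in\Ww$ forces $A$ invertible, which is the case $J=\{1,\dots,n\}$ (the whole matrix is one of its own central sub-matrices). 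Item (2) is the observation that transposition turns a central sub-matrix indexed by $J$ into its own transpose, so determinants are unchanged; and conjugation conjugates the determinant, which is zero iff the original is. Item (3): a permutation matrix conjugation simply relabels indices, so central sub-matrices go to central sub-matrices, while a diagonal invertible $B=\mathrm{diag}(d_i)$ gives $(B^*AB)_{j,k}=\overline{d_j}d_k a_{j,k}$, whose central sub-matrix indexed by $J$ has determinant $\big(\prod_{j\in J}|d_j|^2\big)\det(a_{j,k})_{j,k\in J}\neq0$.

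For items (4) and (5) I would argue that the hypothesis is inherited by central sub-matrices. A central sub-matrix of a symmetric positive (resp.\ negative) definite matrix is again symmetric positive (resp.\ negative) definite, being the compression of the quadratic form to the coordinate subspace spanned by $\{e_j: j\in J\}$; definite matrices are invertible, so every central sub-matrix is invertible and $A\in\Ww$. Likewise a central sub-matrix of a lower (resp.\ upper) triangular matrix is again lower (resp.\ upper) triangular, and its determinant is the product of the corresponding diagonal entries of $A$, which are all nonzero since $A$ is triangular and invertible; hence $A\in\Ww$. Item (6) is essentially definitional: a central sub-matrix of a central sub-matrix of $A$ is again a central sub-matrix of $A$, so if all of the latter are invertible then so are all central sub-matrices of $B$.

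I do not expect a serious obstacle here; the only points requiring a little care are the bookkeeping in item (1) (making sure the correspondence $A(J)\leftrightarrow (A^{-1})(J^c)$ is stated correctly, perhaps most cleanly by noting $\det A(J)=\pm\det A\cdot\det (A^{-1})(J^c)$ via the Cauchy--Binet or block-matrix identity, or simply by invoking item~(2) together with Corollary~\ref{final}), and the verification in item (4) that the compression of a definite form stays definite, which follows by restricting test vectors to those supported on $J$. Everything else is routine.
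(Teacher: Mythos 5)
Your proof is correct, and for items (2), (3), (5) and (6) it is essentially the paper's argument (the paper handles the permutation case of (3) by a three-way case analysis on whether the indices $i,j$ lie in $S$, where your one-line relabelling $(B^*AB)_{j,k}=a_{\sigma(j),\sigma(k)}$ is cleaner but amounts to the same thing). The genuine divergence is in item (1): the paper does not manipulate the matrices $A(J)$ at all, but instead re-invokes Theorem~\ref{elteo} --- $A\in\Ww$ iff $\{e_i\}$ and $\{Ae_i\}$ are woven bases --- and then uses the invariance of weaving under the isomorphism $A^{-1}$ to pass to the pair $\{A^{-1}e_i\}$, $\{e_i\}$, whose change-of-basis matrix is $A^{-1}$. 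Your route via the identity $A^{-1}A(J)=(A^{-1})(J^c)$ and $\det A(J)=\det A_J$ is more elementary and self-contained (it never leaves linear algebra), at the cost of the index bookkeeping you flag; the paper's version is shorter given the machinery already in place. For item (4) the paper argues by contraposition (a singular central sub-matrix $A_J$ yields $y\in\ker A_J$, extended by zero to $x\neq0$ with $x^*Ax=0$, contradicting definiteness), which is the same compression-to-coordinates idea as your direct argument that central sub-matrices of a definite matrix remain definite. Both versions of each item are sound.
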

\begin{proof}
$(1).$ From Theorem~\ref{elteo}, we know that $A\in\Ww$ if and only if $\{e_1, \dots,  e_n\}$ and $\{Ae_1, \dots, Ae_n\}$ are woven bases. And using $A^{-1}$ as an isomorphism we have that $\{e_1, \dots,  e_n\}$ and $\{Ae_1, \dots, Ae_n\}$ are woven if and only if 
\begin{equation*}
\{A^{-1}e_1, \dots, A^{-1}e_n\}\text{ and }\{A^{-1}Ae_1, \dots, A^{-1}Ae_n\}=\{e_1, \dots,  e_n\}
\end{equation*}
are woven bases, which then again by Theorem~\ref{elteo} is equivalent to $A^{-1} \in \Ww$.

\bigskip

\noindent$(2).$ This follows from the fact that central sub-matrices  of $A^t$ and $A^*$ are just the transpose and adjoint respectively of those in $A$.

\bigskip

\noindent$(3).$ A central sub-matrix of $B^*AB$ for $B$ a diagonal invertible matrix, is a central sub-matrix of $A$ with rows and columns multiplied by non-zero complex numbers. The result then follows from the linearity of the determinant.

As for permutation matrices, it is enough to show the invariance under a single permutation since the set of matrices  that fix property under conjugation $\Ww$ is a group. Consider then $R_{i,j}$ the matrix that switches the $i^{\rm th}$-row with the $j^{\rm th}$-row and define $\widetilde{A}:=(R_{i,j})^*AR_{i,j}$. Then $\widetilde{A}=R_{i,j}AR_{i,j}$ is the matrix constructed by first switching rows $i$ and $j$, and then switching columns  $i$ and $j$. Take now central sub-matrix $\widetilde{A}_S$ of $\widetilde{A}$, $S\subset\{1, \dots, n\}$. If $a_{i,i}$ and $a_{j,j}$ are not entries in $\widetilde{A}_S$ then neither the rows nor the columns $i, j$ are in $\widetilde{A}_S$ and so $\widetilde{A}_S=A_S$ the corresponding central sub-matrix in $A$. If both $a_{i,i}$ and $a_{j,j}$ are entries in $\widetilde{A}_S$ then $\widetilde{A}_S$ is a matrix obtained by row and column permutations of $A_S$ and so $\det(\widetilde{A}_S)=\det(A_S)$. If just one of either $a_{i,i}$, or $a_{j,j}$ is an entry in $\widetilde{A}_S$, say $a_{i,i}$, then $\widetilde{A}_S$ is a matrix obtained by row and column permutations of $A_{S'}$ the square sub-matrix of $A$ with $S'=S\setminus\{i\}\cup\{j\}$.

\bigskip

\noindent$(4).$ If $A \notin\Ww_n$ then there exist a central sub-matrix $A_J$ which is not invertible. This means that there exist a non-trivial vector $y\in\ker A_J$. Define now $x\in\C^n$ with $x_i=0$ for all $i\notin J$ and $x_i=y_i$ for $i\in J$. Thus $x^*Ax=0$ and $x\neq 0.$ In particular $A$ cannot be symmetric definite.

\bigskip

\noindent$(5).$ This follows from the fact that central sub-matrices  of a triangular matrix are triangular, and they are invertible if and only if all its diagonal entries are non-zeros. 

\bigskip

\noindent$(6).$ This is straightforward.
\end{proof}

We conjecture that the matrices $B\in\C^{n\times n}$ listed in $(3)$ are the only ones that leave the subclass $\Ww_n$ invariant under conjugation. Arbitrary unitary matrices, for example, can be outside this set. Indeed, let $A=R_{1,2}I\in\C^{n\times n}$ be the matrix consisting of switching the first two rows of the identity. Then $A\notin\Ww_n$ but since $A$ is unitary and invertible there exists another unitary matrix $U$ such that $U^*AU$ is diagonal and invertible and thus belongs to $\Ww_n$.

In general, permutation by columns or rows of matrix in $\Ww$ needs not to remain in $\Ww$. Actually for this to happen all the minors of the matrix must be non-zero.

\begin{proposition}\label{col-perm}
Let $A$ be a square complex matrix. Then all column permutations of $A$ belong to $\Ww$ if and only if all the minors of $A$ are non-zero.
\end{proposition}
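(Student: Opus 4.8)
The plan is to unwind both directions using Theorem~\ref{elteo} and the basic fact that a matrix lies in $\Ww$ iff every central sub-matrix is invertible, i.e. every principal-type minor (indexed by a common set $J$ of rows and columns) is non-zero. The key observation is that a column permutation $A\Pi$ of $A$ has, as its central sub-matrix on an index set $J$, the sub-matrix of $A$ with rows indexed by $J$ and columns indexed by $\Pi(J)$; as $\Pi$ ranges over all permutations and $J$ over all subsets, the pair $(J,\Pi(J))$ ranges over all pairs $(R,C)$ of subsets of $\{1,\dots,n\}$ with $|R|=|C|$. Hence ``all column permutations of $A$ are in $\Ww$'' is exactly the statement ``every minor of $A$ coming from an equinumerous pair of row and column index sets is non-zero'', which is precisely the condition that all minors of $A$ (of every size) are non-zero.

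First I would make the bookkeeping precise: fix a permutation $\sigma$ of $\{1,\dots,n\}$ and let $\Pi_\sigma$ be the corresponding permutation matrix, so that $A\Pi_\sigma$ has columns $(a_{\cdot,\sigma^{-1}(1)},\dots,a_{\cdot,\sigma^{-1}(n)})$ — I would be careful to state the convention so that the $(i,k)$ entry of $A\Pi_\sigma$ is $a_{i,\sigma(k)}$ for the right choice of labelling. Then the central sub-matrix of $A\Pi_\sigma$ on $J\subset\{1,\dots,n\}$ is $(a_{i,\sigma(k)})_{i,k\in J}$, whose determinant is (up to sign, which is irrelevant for vanishing) the minor of $A$ with row set $J$ and column set $\sigma(J)$.

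For the forward direction: suppose all column permutations of $A$ are in $\Ww$. Given any $R,C\subset\{1,\dots,n\}$ with $|R|=|C|=k$, choose a permutation $\sigma$ with $\sigma(R)=C$ (possible since $|R|=|C|$). Then the minor of $A$ on rows $R$, columns $C$ equals $\pm\det$ of the central sub-matrix of $A\Pi_\sigma$ on index set $R$, which is non-zero because $A\Pi_\sigma\in\Ww$. Since every minor of $A$ arises this way, all minors of $A$ are non-zero. Conversely, if all minors of $A$ are non-zero, then for any permutation $\sigma$ and any $J$, the central sub-matrix of $A\Pi_\sigma$ on $J$ has determinant equal (up to sign) to the minor of $A$ on rows $J$, columns $\sigma(J)$, which is non-zero; hence every central sub-matrix of $A\Pi_\sigma$ is invertible, so $A\Pi_\sigma\in\Ww$.

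I do not expect a serious obstacle here — the statement is essentially a reindexing identity together with the determinant-expansion fact already established in the proof of Theorem~\ref{elteo}. The one point demanding care is the convention for how a permutation matrix acts on columns and the matching of index sets, together with noting that $1\times 1$ minors (the entries themselves) and the full $n\times n$ minor ($\det A$) are included, so in particular $A$ must be invertible with no zero entries; I would state this explicitly so the equivalence reads cleanly.
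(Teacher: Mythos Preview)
Your proposal is correct and follows essentially the same approach as the paper: both arguments rest on the observation that the central sub-matrix of a column-permuted matrix $A\Pi_\sigma$ on an index set $J$ is, up to a column reordering (hence up to sign in the determinant), the square sub-matrix of $A$ with row set $J$ and column set $\sigma(J)$, and that as $\sigma$ and $J$ vary this realizes every square sub-matrix of $A$. Your bookkeeping is in fact slightly more explicit than the paper's (which builds the required permutation as a product of transpositions), but the idea is the same.
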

\begin{proof}
Assume first that $A=(a_{i,j})_{i,j=1}^n$ and all its permutations by columns are in $\Ww$, and take $M=(a_{i_s,j_s})_{s=1}^k$ a square sub-matrix of $A$. Let $P_{i_s,j_s}$ be the permutation that switches the $i_s$-column with the $j_s$-column, and define $P:=P_{i_1,j_1}\dots P_{i_k,j_k}$. Then $M$ is a central sub-matrix of $PA$. Indeed, the entry in $(i_1,i_1)$ of $PA$ is $a_{i_1,j_1}$ and the entry in $(i_k,i_k)$ is $a_{i_k,j_k}$. Thus, as $PA\in\Ww$, $\det M\neq0$.

Conversely, suppose now that all minors of $A$ are non-zero and take $P$ a permutation column matrix. Then, central sub-matrices of $PA$ are just permutations of square sub-matrices of $A$. Therefore $PA\in\Ww$.
\end{proof}

\subsection{Fourier matrices}\label{Fourier-m}
Fourier matrices are good candidates to belong to the class $\Ww$. Recall that, given a natural number $n$, the Fourier matrix of order $n$ is defined by $F_n=\{e^{2\pi ijk/n}\}_{k,j=0}^{n-1}$. It is known for example that {\bf all} square sub-matrices of $F_n$ are invertible if and only if $n$ is a prime number (see e.g. \cite{tao} and the references therein). This means that $F_n$ satisfies Proposition~\ref{col-perm} if and only if $n$ is prime. In particular this means that for $n$ prime, $F_n\in\Ww_n$. In this subsection we try to identify if there are other natural numbers $n$ for which $F_n$ is in $\Ww_n$.

To begin with, to see whether a Fourier matrix is in $\Ww$ it suffices to check the invertibility of a subset of its central sub-matrices.

\begin{proposition}\label{central-1}
The Fourier matrix  $F_n\in\Ww_n$ if and only if all its central sub-matrices $(F_n)_J$ with $1\in J$ are invertible.
\end{proposition}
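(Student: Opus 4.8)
The plan is to use the cyclic group structure carried by the index set of $F_n$. It is convenient to index the rows and columns of $F_n$ by $\Z/n\Z=\{0,1,\dots,n-1\}$, with addition modulo $n$; under this convention the distinguished index ``$1$'' in the statement plays the role of $0$, that is, of the constant row (and column) of $F_n$. One implication is trivial: if all central sub-matrices of $F_n$ are invertible, then in particular all $(F_n)_J$ with $0\in J$ are. For the converse I will show that every nonempty $J\subseteq\Z/n\Z$ can be translated to a set $J+t$ containing $0$ without changing whether the corresponding central sub-matrix is invertible; combining this with the hypothesis gives that $(F_n)_J$ itself is invertible.

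The key point is thus the following translation invariance: for nonempty $J\subseteq\Z/n\Z$ and any $t\in\Z/n\Z$, the central sub-matrix $(F_n)_J=\big(\omega^{jk}\big)_{j,k\in J}$ (where $\omega=e^{2\pi i/n}$) is invertible if and only if $(F_n)_{J+t}$ is. To see it, observe that $(F_n)_{J+t}$ is obtained from the matrix $\big(\omega^{(j+t)(k+t)}\big)_{j,k\in J}$ by reindexing its rows and columns through the bijection $j\mapsto j+t$; since reindexing rows and columns by one and the same bijection is a conjugation by a permutation matrix, it does not affect the determinant, hence not invertibility. Now the elementary identity $(j+t)(k+t)=jk+tj+tk+t^2$ gives the factorization
\[
\big(\omega^{(j+t)(k+t)}\big)_{j,k\in J}=\omega^{t^2}\,D\,(F_n)_J\,D,\qquad D:=\mathrm{diag}\big(\omega^{tj}\big)_{j\in J},
\]
with $D$ invertible diagonal and $\omega^{t^2}\neq0$. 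Therefore $\det(F_n)_{J+t}$ and $\det(F_n)_J$ differ by a nonzero multiplicative factor, which proves the claim. (Conceptually, translating the index set just amounts to multiplying $F_n$ on both sides by the invertible diagonal matrix of a character of $\Z/n\Z$.)

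Granting the claim, the proposition follows immediately. Suppose all central sub-matrices $(F_n)_J$ with $0\in J$ are invertible and let $J\subseteq\Z/n\Z$ be any nonempty subset. Choose $j_0\in J$ and put $t=-j_0$; then $0\in J+t$, so $(F_n)_{J+t}$ is invertible by hypothesis, whence $(F_n)_J$ is invertible by translation invariance. As the empty central sub-matrix is trivially invertible, all central sub-matrices of $F_n$ are invertible, i.e. $F_n\in\Ww_n$.

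I do not expect a serious obstacle. The one place that needs care is the displayed factorization: checking that the very same diagonal matrix $D$ occurs on the left and on the right of $(F_n)_J$, and that the passage from $(F_n)_{J+t}$ to $\big(\omega^{(j+t)(k+t)}\big)_{j,k\in J}$ is a simultaneous relabelling of rows and columns rather than an arbitrary rearrangement. Everything else is purely formal.
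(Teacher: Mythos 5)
Your proof is correct and follows essentially the same route as the paper's: both reduce an arbitrary central sub-matrix $(F_n)_J$ to one whose index set contains the constant index by translating $J$, the translation being realized by multiplying rows and columns by nonzero scalars (your $\omega^{t^2}D(F_n)_JD$ factorization is exactly the paper's row-and-column rescaling of the determinant, with $j_0=\min J$ in place of your general $t$). The only cosmetic difference is that you phrase the rescaling as diagonal conjugation plus a permutation reindexing modulo $n$, while the paper performs the determinant manipulations directly with integer shifts.
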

\begin{proof}
Let $\zeta=e^{2\pi i/n}$. Then $F_n=(\zeta^{j \ell})_{\ell,j=0}^{n-1}$. For $0\leqslant j_0<\dots<j_k<n$, we consider the following central $k+1 \times k+1$ sub-matrix of $F_n$:
\begin{equation*}
\left(\begin{array}{lll}\zeta^{j_0^2}&\dots&\zeta^{j_0j_k}
\\
\vdots&\ddots&\vdots
\\
\zeta^{j_{k}j_0}&\dots&\zeta^{j_{k}^2}\end{array}\right)
\end{equation*}
Multiplying each row by the inverse of its first element,  by linearity of the determinant with respect to rows, we obtain
\begin{equation*}
\det\left(\begin{array}{lll}\zeta^{j_0^2}&\dots&\zeta^{j_0j_k}\\\vdots&\ddots&\vdots\\\zeta^{j_kj_0}&\dots&\zeta^{j_k^2}\end{array}\right)=\prod_{s=0}^k\zeta^{-j_0 j_s}\det\left(\begin{array}{lll}1&\dots&\zeta^{j_0(j_k-j_0)}\\\vdots&\ddots&\vdots\\1&\dots&\zeta^{j_k(j_k-j_0)}\end{array}\right).
\end{equation*}
Now, multiplying the resulting $k+1$-columns by the inverse of the top element we have
\begin{equation*}
\det\left(\begin{array}{lll}1&\dots&\zeta^{j_0(j_k-j_0)}\\\vdots&\ddots&\vdots\\1&\dots&\zeta^{j_k(j_k-j_0)}\end{array}\right)=\prod_{s=0}^k\zeta^{-j_0(j_s-j_0)}\det\left(\begin{array}{lll}1&\dots&1\\\vdots&\ddots&\vdots\\1&\dots&\zeta^{(j_k-j_0)^2}\end{array}\right).
\end{equation*}
So that if $l_s :=j_s-j_0, s= 0, \dots, k$,  the entries of this last matrix are $(\zeta^{l_sl_r})_{r,s=1}^k$ which is a central sub-matrix of $F_n$ with its first entry $1$ since $0=l_0<\dots<l_k<n$.
\end{proof}

\begin{corollary}
Let $n\geqslant2$ be a natural number. Then all central $2\times 2$ sub-matrices  of $F_n$ are invertible if and only if $n$ is square free.
\end{corollary}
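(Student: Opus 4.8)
The plan is to turn the linear-algebra condition into an elementary divisibility statement and then recognize the latter as the very definition of square-freeness. First I would compute a general central $2\times 2$ sub-matrix of $F_n$: with $\zeta=e^{2\pi i/n}$, the sub-matrix indexed by $J=\{j_0,j_1\}$, $0\le j_0<j_1\le n-1$, has rows $(\zeta^{j_0^2},\zeta^{j_0j_1})$ and $(\zeta^{j_0j_1},\zeta^{j_1^2})$, hence determinant $\zeta^{j_0^2+j_1^2}-\zeta^{2j_0j_1}=\zeta^{2j_0j_1}(\zeta^{(j_1-j_0)^2}-1)$. (Alternatively, one quotes Proposition~\ref{central-1} to reduce to $j_0=0$ at the outset, in which case the determinant is simply $\zeta^{m^2}-1$.) This vanishes exactly when $\zeta^{(j_1-j_0)^2}=1$, i.e. when $n\mid(j_1-j_0)^2$. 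As $j_0<j_1$ range over $\{0,\dots,n-1\}$, the difference $m:=j_1-j_0$ ranges over all of $\{1,\dots,n-1\}$, so ``all central $2\times 2$ sub-matrices of $F_n$ are invertible'' is equivalent to ``$n\nmid m^2$ for every $m$ with $1\le m\le n-1$''.

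Second, I would show this last condition is equivalent to $n$ being square free. For one direction, if $n$ is square free and $n\mid m^2$, then writing $n$ as a product of distinct primes, each such prime divides $m^2$ and hence divides $m$, so $n\mid m$; in particular no $m\in\{1,\dots,n-1\}$ can satisfy $n\mid m^2$. For the other direction, argue by contrapositive: if $p^2\mid n$ for some prime $p$, put $m=n/p$, so that $1\le m\le n-1$ and $m^2=n\cdot(n/p^2)$ is a multiple of $n$; this exhibits a singular central $2\times 2$ sub-matrix (the one indexed by $\{0,m\}$).

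There is no real obstacle here. The only point needing a moment's care is the observation that every value in $\{1,\dots,n-1\}$ occurs as a difference $j_1-j_0$ with $0\le j_0<j_1\le n-1$, which is what lets one pass freely between ``all central $2\times 2$ sub-matrices'' and ``the sub-matrices indexed by $\{0,m\}$''. Everything else is the one-line determinant computation above together with the standard characterization of square-free integers by the implication $n\mid m^2\Rightarrow n\mid m$.
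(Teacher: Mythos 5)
Your proof is correct and follows essentially the same route as the paper: reduce every central $2\times 2$ sub-matrix to the one indexed by $\{0,m\}$ (you do this by directly factoring the determinant as $\zeta^{2j_0j_1}(\zeta^{(j_1-j_0)^2}-1)$, the paper by invoking its Proposition on sub-matrices containing the first index), and then show that $n\nmid m^2$ for all $1\le m\le n-1$ is exactly square-freeness, using the identical prime-factorization argument in one direction and the witness $m=n/p$ when $p^2\mid n$ in the other.
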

\begin{proof}
By the argument in the proof of 
Proposition \ref{central-1} we know that any central sub-matrix of $F_n$ is invertible if and only if there exists an invertible central sub-matrix of the same order whose set of indexes contain 1.
As a consequence of that and the fact that the elements in the diagonal of $F_n$ are of the form $\zeta^{j^2}$ for $j=0, \dots, n-1$, we only need to look at the case when the considered central sub-matrix is of the form,
\begin{equation*}
\left(\begin{array}{ll}1&1\\1&\zeta^{j^2}\end{array}\right) \quad \text{ for some $0<j<n$.}
\end{equation*} 
The determinant of this sub-matrix is $\zeta^{j^2}-1$ and thus it is invertible if and only if $\zeta^{j^2}\neq1$. As $\zeta=e^{2\pi i/n}$, then $\zeta^{j^2}=1$ if and only if $n$ divides $j^2$ i.e. $j^2 = kn$.

Suppose first that $n$ is square free and let $n=p_1\dots p_m$ be its decomposition into primes with $p_r\neq p_s$ for $r\neq s$. If $n$ divides $j^2$ then $p_r$ divides $j^2$ for $1\leqslant r\leqslant m$. But then $p_r$ divides $j$ for $1\leqslant r\leqslant m$, so that $n$ divides $j$ which is impossible since $0<j<n$.

If $n$ is not square free then we can decompose it as $n=p^2q$ with $p$ prime. Take $j=pq$. Then $0<j<n$ and $n$ divides $j^2$, so that there exists a $2\times2$ central sub-matrix of $F_n$ which is not invertible.
\end{proof}

\begin{corollary}\label{Fourier}
If $n\geqslant2$ is a natural number such that $F_n\in\Ww$ then $n$ is square free.
\end{corollary}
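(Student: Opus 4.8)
The statement \textbf{Corollary \ref{Fourier}} claims that if $F_n \in \Ww$ then $n$ is square free. The plan is to derive this as an immediate consequence of the preceding corollary, which characterizes when \emph{all} central $2\times 2$ sub-matrices of $F_n$ are invertible. Indeed, membership in $\Ww_n$ requires \emph{every} central sub-matrix to be invertible, and in particular every central $2 \times 2$ sub-matrix. So the logic is a one-line contrapositive: if $n$ is not square free, then by the previous corollary there is a central $2\times 2$ sub-matrix of $F_n$ that fails to be invertible, hence $F_n \notin \Ww_n$, hence $F_n \notin \Ww$.

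More explicitly, I would argue as follows. Assume $F_n \in \Ww$; since $F_n$ is $n \times n$ this means $F_n \in \Ww_n$, so by definition every central sub-matrix of $F_n$ is invertible. In particular all central $2\times 2$ sub-matrices of $F_n$ are invertible. By the Corollary preceding this one, this forces $n$ to be square free, which is exactly the desired conclusion.

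There is essentially no obstacle here: the corollary is purely a packaging of the previous result, restricting attention to $2\times 2$ central minors, which are only a subset of all the conditions defining $\Ww_n$. The only thing to be careful about is the edge case $n = 1$, which is excluded by the hypothesis $n \geqslant 2$ (and is vacuously square free anyway), and the fact that "square free" for the previous corollary was stated for $n \geqslant 2$ as well, so the hypotheses match up cleanly. One could also phrase the proof so as to recover the explicit witness: if $n = p^2 q$ with $p$ prime, then taking $j = pq$ gives $0 < j < n$ with $n \mid j^2$, so the central sub-matrix of $F_n$ on the index set $\{0, j\}$ (or rather, a central sub-matrix of the same order containing the index $1$, after the reduction in Proposition \ref{central-1}) has determinant $\zeta^{j^2} - 1 = 0$; but invoking the previous corollary directly is cleaner and suffices.

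\begin{proof}
Suppose $F_n \in \Ww$. Since $F_n$ is an $n \times n$ matrix, this means $F_n \in \Ww_n$, so by definition every central sub-matrix of $F_n$ is invertible; in particular, every central $2 \times 2$ sub-matrix of $F_n$ is invertible. By the previous corollary, this implies that $n$ is square free.
\end{proof}
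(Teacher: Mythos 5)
Your proof is correct and matches the paper's (implicit) argument exactly: the corollary is stated as an immediate consequence of the preceding one, since membership in $\Ww_n$ requires in particular that all central $2\times 2$ sub-matrices of $F_n$ be invertible, which by that corollary forces $n$ to be square free. Nothing is missing.
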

That is, for $n$ to be square-free is a necessary condition for $F_n\in\Ww$.

We conjecture that the converse result of Corollary \ref{Fourier} is also true. i.e. 
\begin{conjecture}
If $n$ is square free then $F_n\in\Ww$. That is, if $n$ is square free then every central submatrix of $F_n$ is invertible.
\end{conjecture}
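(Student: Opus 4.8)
\emph{A possible approach.} The plan is to recast the conjecture as an uncertainty principle for the discrete Fourier transform on $\Z/n$, and then to prove it by induction on the number of prime divisors of $n$ using the Chinese Remainder Theorem, with the prime case (Chebotarev's theorem) as the base.

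Write $\widehat c$ for the DFT of $c\in\C^n$, so that $\widehat c=F_nc$ and $(\widehat c)|_J=(F_n)_J\,(c|_J)$ whenever $\operatorname{supp}(c)\subseteq J$. Hence $(F_n)_J$ is singular exactly when some nonzero $c$ supported on $J$ has $\widehat c$ vanishing on $J$, and taking $J=\operatorname{supp}(c)$ one sees, in the spirit of Corollary~\ref{final}, that $F_n\notin\Ww_n$ if and only if there is a nonzero $c\in\C^n$ with $\operatorname{supp}(c)\cap\operatorname{supp}(\widehat c)=\emptyset$. (Proposition~\ref{central-1} already reduces which $J$ must be checked, but this reformulation disposes of all $J$ simultaneously.) Thus the conjecture is equivalent to: \emph{for $n$ squarefree, no nonzero function on $\Z/n$ has Fourier-disjoint support.} For $n$ prime this is precisely the sharp inequality $\abs{\operatorname{supp}(c)}+\abs{\operatorname{supp}(\widehat c)}\geqslant n+1$ --- equivalently Chebotarev's theorem on $F_n$, which gives more than we need --- and it is what I would invoke for each prime divisor of $n$. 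It is also worth noting that the standard way to produce a nonzero $c$ with Fourier-disjoint support is to take it supported on a proper nontrivial subgroup $H\leqslant\Z/n$ with $H\subseteq H^{\perp}$ and then modulate it by a character whose frequency avoids $H^{\perp}$; for cyclic $\Z/n$ such a subgroup exists if and only if $n$ is not squarefree, so the conjecture amounts to the claim that there are no examples beyond this obstruction.

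For the inductive step write $n=pn'$ with $p$ prime and $n'$ squarefree with one fewer prime divisor, and use the Chinese Remainder Theorem to identify $\Z/n$ with $\Z/p\times\Z/n'$; under this identification $F_n$ becomes conjugate, by a single permutation acting on rows and columns alike, to a tensor product $F_p'\otimes F_{n'}'$ in which each factor is again a Fourier matrix (for a possibly different choice of primitive root, a harmless reindexing). Viewing $c$ as a $p\times n'$ matrix $C$, its DFT is then $F_p'CF_{n'}'$; put $D:=CF_{n'}'$. Applying the inductive hypothesis to each nonzero row of $C$ shows that $\operatorname{supp}(D)$ meets $\operatorname{supp}(C)$ inside every nonzero row, and applying Chebotarev on $\Z/p$ to each nonzero column of $F_p'D$ shows that $\operatorname{supp}(D)$ meets $\operatorname{supp}(\widehat c)$ inside every nonzero column. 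Some structured configurations close immediately: if $\operatorname{supp}(c)$ or $\operatorname{supp}(\widehat c)$ is contained in a subgroup isomorphic to $\Z/p$, then squarefreeness forces $\gcd(\abs{H},\abs{H^{\perp}})=1$, the relevant restriction of $\widehat c$ is, up to reindexing, the $\Z/p$-DFT of the corresponding restriction of $c$, and the $\Z/p$ uncertainty inequality finishes the argument.

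The main obstacle is to deduce $\operatorname{supp}(C)\cap\operatorname{supp}(\widehat c)\neq\emptyset$ from the two facts about $\operatorname{supp}(D)$: the ``witness'' positions in $\operatorname{supp}(D)$ produced by the row argument and by the column argument need not coincide, and for composite $n'$ the inductive hypothesis only supplies non-disjointness of the two supports --- not the additive lower bound on their sizes that powers the prime case, and no such linear uncertainty inequality holds for composite cyclic groups. Indeed even the strong statement ``$F_p'$ and $F_{n'}'$ have all their (central) minors nonzero $\Rightarrow$ every central minor of $F_p'\otimes F_{n'}'$ is nonzero'' is false for general matrices with those properties, so the assembly must exploit the arithmetic of the roots of unity themselves and cannot be purely formal. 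Closing this gap seems to require either a new uncertainty principle valid for squarefree cyclic groups, or a considerably finer bookkeeping of the positions produced by the row and column steps. Two equivalent reformulations meet the same wall and are worth keeping in view: first, after Proposition~\ref{central-1}, $\det((F_n)_J)$ equals $\prod_{s<r}(\zeta_n^{j_r}-\zeta_n^{j_s})$ --- automatically nonzero --- times a Schur polynomial $s_\lambda$, with $\lambda$ the partition read off from $J$, evaluated at the roots of unity $\{\zeta_n^{\,j}:j\in J\}$, so the conjecture is the non-vanishing of these particular Schur evaluations; second, if $\det((F_n)_J)=0$ then the cyclotomic polynomial $\Phi_n$ divides $P(x):=\det(x^{\,j_sj_r})\in\Z[x]$, and reducing modulo a prime of $\Z[\zeta_n]$ above some $p\mid n$ --- under which $\zeta_n\mapsto\zeta_{n/p}$ --- turns the vanishing into the analogous statement for $n/p$ in characteristic $p$, again requiring control of configurations not aligned with a proper subgroup.
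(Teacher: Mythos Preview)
The paper does not prove this statement; it records it explicitly as a \emph{conjecture} and offers no argument beyond the necessary direction (Corollary~\ref{Fourier}). So there is no ``paper proof'' to compare against, and your proposal should be read as an attack on an open problem rather than as a reconstruction.

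Your reformulation is correct and is the natural one: $(F_n)_J$ is singular for some $J$ if and only if there is a nonzero $c\in\C^n$ with $\operatorname{supp}(c)\cap\operatorname{supp}(\widehat c)=\emptyset$, so the conjecture is exactly the ``support--support'' uncertainty principle you state. Your CRT reduction is also set up correctly: using the \emph{same} CRT permutation on rows and columns, $F_n$ is genuinely conjugate to $F_p'\otimes F_{n'}'$ with each factor a Fourier matrix for some primitive root (this follows from the orthogonal-idempotent identity $jk\equiv j_1k_1e_1+j_2k_2e_2\pmod n$), so central submatrices of $F_n$ correspond to central submatrices of the tensor product. The Schur-polynomial and cyclotomic-reduction reformulations you record at the end are also accurate and standard.

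That said, your write-up is candid that the inductive step does not close. The information you extract---that $\operatorname{supp}(D)$ meets $\operatorname{supp}(C)$ row-by-row and meets $\operatorname{supp}(\widehat c)$ column-by-column---is not enough to force $\operatorname{supp}(C)\cap\operatorname{supp}(\widehat c)\neq\emptyset$; you yourself note that the tensor-of-$\Ww$-matrices statement is false in general, so any completion must use arithmetic specific to roots of unity. Nothing in the paper supplies that missing ingredient, and to my knowledge no such squarefree uncertainty principle is available in the literature. In short: your approach is sound as far as it goes and your self-diagnosis of the obstruction is accurate, but what you have is a strategy with a clearly identified gap, not a proof---which is consistent with the paper's decision to leave the statement as a conjecture.
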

This would make for a larger subset than only prime numbers to $F_n$ belong to $\Ww$. 

\begin{remark}
Let $\widehat{x}$ denote the discrete Fourier transform of $x\in\C^n$, that is $\widehat{x}=F_nx$. Applying Corollary \ref{final} for the case of Fourier matrices  which are in $\Ww$ reads as follows: if $n$ is such that $F_n\in\Ww_n$ then for any $J\subset\{1, \dots, n\}$ and any $x\in\C^n$ we can reconstruct $x$ from the measurements $\{\widehat{x}(j)\}_{j\in J}\cup\{x(j)\}_{j\in I\setminus J}$.
\end{remark}

\section{Infinite case, Riesz basis}\label{infinite}

We move now to the case of Riesz bases of infinite-dimensional separable Hilbert spaces. Let us first recall the definition of a Riesz basis: in a Hilbert space $\Hh$ we say that a countable subset of vectors $\{v_i\}_{i\in I}$ is a {\it Riesz sequence} if there exists constants $A,B>0$ such that
\begin{equation*}
A\norm{c}_2\leqslant\norm{\sum_{i\in I}c_iv_i}\leqslant B\norm{c}_2
\end{equation*}
holds for all finite sequences of coefficients $c=(c_i)_{i\in I}$. We say that $\{v_i\}_{i\in I}$ is a {\it Riesz basis} if in addition it is a frame.

 A result in \cite[Theorem 5.3]{weaving} shows that if two Riesz bases $\{v_i\}_{i\in I}$ and $\{w_i\}_{i\in I}$ are woven frames then every `weaving choice' $\{v_i\}_{i\in J}\cup\{w_i\}_{i\in J ^c}$ is actually a Riesz basis.

We choose Riesz bases because they are the generalization that best fits  the results of the previous section. Indeed, if $\{v_i\}_{i\in I}$ and $\{w_i\}_{i\in I}$ are two Riesz basis in a Hilbert space $\Hh$ then there is an isomorphism $T:\Hh\to\Hh$ that sends $\{v_i\}_{i\in I}$ into $\{w_i\}_{i\in I}$. Going down to $\ell^2(I)$ via the isomorphism $D_v:\Hh\to\ell^2(I)$ that sends $\{v_i\}_{i\in I}$ to the canonical basis $\{e_i\}_{i\in I}$ then gives an isomorphism $A^t:\ell^2(I)\to\ell^2(I)$. In other words, we have the following commuting diagram
\begin{equation*}
\begin{tikzcd}
\Hh\arrow{r}{T}\arrow[swap]{d}{D_v}&\Hh\arrow{d}{D_v}
\\
\ell^2(I)\arrow{r}{A^t}&\ell^2(I)
\end{tikzcd}
\end{equation*}
Finally, writing $a_{i,j}=\scal{e_i,A^te_j}$ allows to define the change of basis matrix in this context.

\begin{definition}
Given $\{v_i\}_{i\in I}$ and $\{w_i\}_{i\in I}$ as before we say that $A=(a_{i,j})_{i,j\in I}$ is the change of basis matrix from $\{v_i\}_{i\in I}$ to $\{w_i\}_{i\in I}$ if
\begin{equation*}
A^t\{v_i\}_{i\in I}=\{w_i\}_{i\in I}
\end{equation*}
in the sense that for each $i\in I$ we have
\begin{equation*}
w_i=\sum_{j\in I}a_{j,i}v_j.
\end{equation*}
\end{definition}

As before, the characterization of woven bases will come from the central sub-matrices of the change of basis matrix.

\begin{definition}
As before the square {\it central} sub-matrices  of the complex matrix $A=(a_{i,j})_{i,j\in I}$ are those sub-matrices that can be written as $A_J := (a_{i,j})_{i,j\in J},$ where $J$ is any subset of $I$.
\end{definition}

\begin{theorem}\label{teo20}
Let $\Hh$ be a Hilbert space. Let $\{v_i\}_{i\in I}$, $\{w_i\}_{i\in I}$ be two Riesz bases of $\Hh$ and denote by $A$ the change of basis matrix. Then $\{v_i\}_{i\in I}$ and $\{w_i\}_{i\in I}$ are woven if and only if all central sub-matrices of $A$ define uniformly bounded invertible operators, i.e. there exist constants $C_1$ and $C_2$ such that \;$C_1\norm{\alpha}_{\ell^2(J)}^2\leqslant\norm{A_J \alpha}_{\ell^2(J)}^2\leqslant C_2\norm{\alpha}_{\ell^2(J)}^2$, for any $J\subset I$ and $\alpha\in\ell^2(J)$.
\end{theorem}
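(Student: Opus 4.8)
The plan is to reduce, exactly as in the finite-dimensional Theorem~\ref{elteo}, to the model situation $\{e_i\}_{i\in I}$ versus $\{A^te_i\}_{i\in I}$ in $\ell^2(I)$, using that weaving is invariant under the isomorphism $D_v$ (and that by \cite[Theorem 5.3]{weaving} every weaving choice of two woven Riesz bases is again a Riesz basis, so the relevant frame bounds are automatically Riesz-sequence bounds). After this reduction, for $J\subset I$ let $A(J)$ denote the operator on $\ell^2(I)$ whose action replaces, row by row, the $j$-th row of $A^t$ by $e_j^t$ for $j\in I\setminus J$ and keeps it for $j\in J$; equivalently $A(J) = P_{J^c} + P_J A^t$ where $P_J$ is the coordinate projection onto $\ell^2(J)$. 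Then the weaving choice $\{e_i\}_{i\in I\setminus J}\cup\{A^te_i\}_{i\in J}$ is precisely the family of columns of $A(J)^t$, so it is a Riesz basis (with uniform bounds as $J$ ranges over subsets of $I$) if and only if the operators $A(J)$ are invertible with uniformly bounded inverses.

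The core step is then the identity linking $A(J)$ to the central submatrix $A_J$. Write $\ell^2(I) = \ell^2(J)\oplus\ell^2(J^c)$ and block-decompose $A^t = \left(\begin{smallmatrix} (A^t)_{JJ} & (A^t)_{JJ^c}\\ (A^t)_{J^cJ} & (A^t)_{J^cJ^c}\end{smallmatrix}\right)$; note $(A^t)_{JJ}=(A_J)^t$ since the central submatrix of $A^t$ indexed by $J$ is the transpose of that of $A$. In this block form
\begin{equation*}
A(J) = \begin{pmatrix} (A_J)^t & (A^t)_{JJ^c}\\ 0 & I_{J^c}\end{pmatrix},
\end{equation*}
an operator that is invertible (with an explicit inverse obtained by back-substitution, namely $\left(\begin{smallmatrix} (A_J)^{-t} & -(A_J)^{-t}(A^t)_{JJ^c}\\ 0 & I\end{smallmatrix}\right)$) exactly when $(A_J)^t$ — equivalently $A_J$ — is invertible on $\ell^2(J)$. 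So for each fixed $J$, "the weaving choice is a Riesz basis" $\iff$ "$A_J$ is an invertible operator on $\ell^2(J)$". It remains to track the constants: one must show that a \emph{uniform} lower/upper frame bound for all weaving choices is equivalent to \emph{uniform} invertibility bounds $C_1\|\alpha\|^2\le\|A_J\alpha\|^2\le C_2\|\alpha\|^2$ for $A_J$ over all $J$. Here I would use that the off-diagonal block $(A^t)_{JJ^c}$ is the restriction of a fixed bounded operator (so its norm is bounded by $\|A^t\|$ independently of $J$), hence the norm of $A(J)$ and, via the explicit formula above, the norm of $A(J)^{-1}$ are controlled by $\|A^t\|$, $\|A^{-t}\|$ and $\sup_J\|A_J^{-1}\|$; conversely $\|A_J^{\pm1}\| \le \|A(J)^{\pm1}\|$ because $A_J^{\pm1}$ is the compression of $A(J)^{\pm1}$ to the corner $\ell^2(J)$ (for $A(J)^{-1}$ this uses the upper-triangular block shape, which makes the $(J,J)$-corner of the inverse equal to $(A_J)^{-t}$ on the nose).

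The main obstacle is precisely this last bookkeeping with uniform constants rather than mere invertibility: in infinite dimensions "every $A_J$ invertible" does not by itself give uniformly bounded inverses, so one genuinely needs the two-sided norm comparisons above, plus the observation from \cite[Theorem 5.3]{weaving} that wovenness of Riesz bases already forces uniform Riesz bounds on weaving choices (which is what makes the reverse implication have any content). A minor point to handle carefully is that $A_J$ a priori is only a matrix; one should note that it is the compression $P_J A^t|_{\ell^2(J)}$ of a bounded operator and hence automatically defines a bounded operator on $\ell^2(J)$ with $\|A_J\|\le\|A^t\| = \|A\|$, so "invertible" unambiguously means "bounded inverse". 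Everything else is the same bijection between weaving choices and central submatrices already used in Theorem~\ref{elteo}, now read at the level of operators instead of determinants.
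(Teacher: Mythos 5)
Your proposal is correct, and the overall skeleton (reduce to $\ell^2(I)$ via $D_v$, identify each weaving choice with the columns of $A(J)=P_{J^c}+P_JA^t$, use \cite{weaving} to convert uniform frame bounds of weavings of Riesz bases into uniform Riesz-sequence bounds) matches the paper's. Where you genuinely diverge is in how the equivalence ``$A(J)$ uniformly invertible $\iff$ $A_J$ uniformly invertible'' is established. The paper proves the two implications by separate, coordinate-level arguments: for the forward direction it constructs, for each $\alpha\in\ell^2(J)$, the extension $\gamma$ with $\gamma_j=-\sum_{i\in J}\alpha_i a_{j,i}$ on $I\setminus J$ so that the corresponding weaving combination equals $E_JA_J\alpha$ (this is exactly your back-substitution written out in coordinates), and for the converse it runs an $\varepsilon$-contradiction, estimating $\norm{P_J\rho}$ and $\norm{P_{I\setminus J}\rho}$ for a hypothetical almost-null weaving combination $\rho$ and invoking \cite{weaving} to pass from uniform Riesz sequences to wovenness. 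Your block-triangular factorization $A(J)=\left(\begin{smallmatrix}(A_J)^t&\ast\\0&I\end{smallmatrix}\right)$ with its explicit inverse handles both directions at once, makes the uniform-constant bookkeeping transparent ($\norm{A(J)^{\pm1}}$ controlled by $\norm{A}$ and $\sup_J\norm{A_J^{-1}}$, and conversely by compression to the $(J,J)$ corner), and yields surjectivity of the weaving synthesis operators directly rather than through the Riesz-sequence criterion. Two points you should make explicit in a write-up: first, the corner of $A(J)$ is $(A_J)^t$ rather than $A_J$, so you need the (easy, but necessary) remark that transposition on $\ell^2$ preserves invertibility and operator norm --- note that this step genuinely uses invertibility of $A_J$ and not merely the two-sided norm inequality, since boundedness below is not preserved under transposition; second, the fact that for a Riesz basis the optimal frame bounds coincide with the optimal Riesz bounds, which is what turns the uniform frame bounds in the definition of woven into the uniform bounds on $A(J)^{-1}$ that your compression argument consumes.
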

\begin{proof}
First, note that the isomorphism $D_v:\Hh\to\ell^2(I)$ that sends $\{v_i\}_{i\in I}$ into the canonical basis $\{e_i\}_{i\in I}$, also sends $\{w_i\}_{i\in I}$ into the columns of $A$. Indeed, 
as
\begin{equation*}
w_i=\sum_{j\in I}a_{j,i}v_j \quad \text{ for all $i\in I$},
\end{equation*}
and then 
\begin{equation*}
D_vw_i=\sum_{j\in I}a_{j,i}Dv_j=\sum_{j\in I}a_{j,i}e_j=:a_i ,
\end{equation*}
where $a_i$ denotes the $i^{\rm th}$-column of $A$. Thus, using $D_v$ as an isomorphism, we have that $\{v_i\}_{i\in I}$ and $\{w_i\}_{i\in I}$ are woven if and only if $\{e_i\}_{i\in I}$ and $\{a_i\}_{i\in I}$ are woven. We can then work in $\ell^2(I)$ in the sense that to prove the theorem it is enough to show that $\{e_i\}_{i\in I}$ and $\{a_i\}_{i\in I}$ are woven if and only if the central sub-matrices of $A$ are uniformly bounded invertible operators.

Given a subset $J\subset I$ we will denote by $E_J:\ell^2(J)\to\ell^2(I)$ the immersion given by
\begin{equation*}
E_J(\alpha)_i=\left\{\begin{array}{ll}\alpha_i&\text{if }i\in J\\0&\text{if }i\notin J.\end{array}\right.
\end{equation*}
Then the adjoint operator, $E_J^*:\ell^2(I)\to\ell^2(J)$ is the restriction to the indexes of $J$. Further, $P_J:=E_JE_J^*:\ell^2(I)\to\ell^2(I)$ is the projection onto the image of $E_J$.
Finally, note that if $A_J=(a_{i,j})_{i,j\in J}$ is a central sub-matrix and $\alpha\in\ell^2(J)$ then 
\begin{equation*}
A_J (\alpha)=E_J^*AE_J (\alpha)
\end{equation*}
whenever $A_J(\alpha)$ is defined.

Assume first that $\{e_i\}_{i\in I}$ and $\{a_i\}_{i\in I}$ are woven and fix $J\subset I$. As $A$ defines a bounded operator in $\ell^2(I)$ we have that for any $\alpha\in\ell^2(J)$
\begin{align*}
\norm{A_J(\alpha)}_{\ell^2(J)}=\norm{E_J^*AE_J(\alpha)}_{\ell^2(J)}\leqslant\norm{AE_J(\alpha)}_{\ell^2(I)}&\leqslant\norm{A}_{\ell^2(I)\to\ell^2(I)}\norm{E_J(\alpha)}_{\ell^2(I)}
\\
&\leqslant\norm{A}_{\ell^2(I)\to\ell^2(I)}\norm{\alpha}_{\ell^2(J)}.
\end{align*}
Therefore, $A_J:\ell^2(J)\to\ell^2(J)$ is a bounded operator and its norm is controlled by an independent constant. Thus, we are left to show that $A_J$ is injective and uniformly bounded from below.

Let $\alpha\in\ell^2(J)$ and note that
\begin{equation*}
\norm{A_J(\alpha)}_{\ell^2(J)}=\norm{E_J^*AE_J(\alpha)}_{\ell^2(J)}=\norm{E_JE_J^*AE_J(\alpha)}_{\ell^2(I)}=\norm{\sum_{i\in J}\alpha_iE_J E_J^*(a_i)}_{\ell^2(I)},
\end{equation*}
where the second inequality holds because $E_J:\ell^2(J)\to\ell^2(I)$ is an immersion. Next notice that
\begin{align*}
\sum_{i\in J}\alpha_iE_JE_J^*(a_i) &=
\sum_{i\in J}\alpha_iP_J(a_i) = \sum_{i\in J}\alpha_i(a_i-P_{I\setminus J}(a_i))=\sum_{i\in J}\alpha_ia_i-\sum_{i\in J}\alpha_iP_{I\setminus J}(a_i)
\\
& =\sum_{i\in J}\alpha_ia_i-P_{I\setminus J}\left(\sum_{i\in J}\alpha_ia_i\right)=\sum_{i\in J}\alpha_ia_i-\sum_{j\in I\setminus J}\left(\sum_{i\in J}\alpha_ia_{j,i}\right)e_j.
\end{align*}
Defining $\gamma\in\ell^2(I)$ as 
\begin{equation*}
\gamma_i:=\left\{\begin{array}{ll}\alpha_i & i\in J
\\
-\sum_{i\in J}\alpha_ia_{j,i} & i\in I\setminus J,\end{array}\right.
\end{equation*}
the previous computation yields
\begin{equation*}
\norm{A_J(\alpha)}_{\ell^2(J)}=\norm{\sum_{i\in J}\gamma_ia_i+\sum_{i\in I\setminus J}\gamma_ie_i}_{\ell^2(I)}.
\end{equation*}

Now, as $\{e_i\}_{i\in I}$ and $\{a_i\}_{i\in I}$ are woven then there exists an independent constant $C>0$ such that
\begin{equation*}
\norm{\sum_{i\in J}\gamma_ia_i+\sum_{i\in I\setminus J}\gamma_ie_i}_{\ell^2(I)}\geqslant C\norm{\gamma}_{\ell^2(I)}.
\end{equation*}
On the other hand
\begin{equation*}
\norm{\gamma}^2_{\ell^2(I)}\geqslant\sum_{i\in J}\abs{\gamma_i}^2=\sum_{i\in J}\abs{\alpha_i}^2=\norm{\alpha}^2_{\ell^2(J)},
\end{equation*}
and thus in conclusion,
\begin{equation*}
\norm{A_J(\alpha)}_{\ell^2(J)}\geqslant C\norm{\alpha}^2_{\ell^2(J)},
\end{equation*}
which is what we wanted to prove.

For the other implication, assume now that all central sub-matrices of $A$ define uniformly bounded invertible operators. We want to show that $\{e_i\}_{i\in I}$ and $\{a_i\}_{i\in I}$ are woven. From \cite[Theorem 5.2]{weaving} we know that it is enough to show that  for any $J\subset I$ the set $\{a_i\}_{i\in J}\cup\{e_i\}_{i\in I\setminus J}$ is a Riesz sequence with uniform bounds independent of $J$.

Since $\{e_i\}_{i\in I}$ and $\{a_i\}_{i\in I}$ are Riesz bases in $\ell^2(I)$ there exist some constants $C_1,C_2>0$ such that
\begin{align*}
\norm{\sum_{i\in J}\alpha_ia_i+\sum_{i\in I\setminus J}\alpha_ie_i}_{\ell^2(I)}\leqslant\norm{\sum_{i\in J}\alpha_ia_i}_{\ell^2(I)}+\norm{\sum_{i\in I\setminus J}\alpha_ie_i}_{\ell^2(I)}&\leqslant C_1\norm{\alpha}_{\ell^2(I)}+C_2\norm{\alpha}_{\ell^2(I)}
\\
&\leqslant(C_1+C_2)\norm{\alpha}_{\ell^2(I)}
\end{align*}
holds for all $\alpha\in\ell^2(I)$ and all $J\subset I$. Therefore, we only need to show the existence of a lower bound $C_3>0$ such that
\begin{equation*}
C_3\norm{\alpha}_{\ell^2(I)}\leqslant\norm{\sum_{i\in J}\alpha_ia_i+\sum_{i\in I\setminus J}\alpha_ie_i}_{\ell^2(I)}
\end{equation*}
holds for all $\alpha\in\ell^2(I)$ and all $J\subset I$. 

We proceed by contradiction. Suppose that for every $\varepsilon >0$ there exists $\alpha\in\ell^2(I)$ and $J\subset I$ with $\norm{\alpha}_{\ell^2(I)}=1$ and such that 
\begin{equation*}
\norm{\sum_{i\in J}\alpha_ia_i+\sum_{i\in I\setminus J}\alpha_ie_i}_{\ell^2(I)}\leqslant\varepsilon.
\end{equation*}
Let $\rho:=\sum_{i\in J}\alpha_ia_i+\sum_{i\in I\setminus J}\alpha_ie_i$. Using the projections defined before, we will now estimate $\norm{P_J(\rho)}$ and $\norm{P_{I\setminus J}(\rho)}$ to yield the desired contradiction. First note that projecting gives 
\begin{equation*}
\norm{P_J(\rho)}_{\ell^2(I)}\leqslant\norm{\rho}_{\ell^2(I)}\leqslant\varepsilon\quad\text{and}\quad\norm{P_{I\setminus J}(\rho)}_{\ell^2(I)}\leqslant\norm{\rho}_{\ell^2(I)}\leqslant\varepsilon.
\end{equation*}
Now, for each $i\in J$ let $b_i\in\ell^2(J)$ be defined by $b_i:=(E_J^*)a_i$. Also let $\beta\in\ell^2(J)$ given by $\beta:=(E_J^*)\alpha$. Thus
\begin{align*}
\norm{P_J(\rho)}_{\ell^2(I)}& =\norm{P_J\left(\sum_{i\in J}\alpha_ia_i+\sum_{i\in I\setminus J}\alpha_ie_i\right)}_{\ell^2(I)}=\norm{\sum_{i\in J}\alpha_iP_J(a_i)}_{\ell^2(I)}=\norm{\sum_{i\in J}\alpha_iE_JE_J^*(a_i)}_{\ell^2(I)}
\\
& =\norm{\sum_{i\in J}\alpha_iE_J(b_i)}_{\ell^2(I)}=\norm{\sum_{i\in J}\alpha_ib_i}_{\ell^2(J)}=\norm{\sum_{i\in J}E_J^*(\alpha_ib_i)}_{\ell^2(J)}=\norm{A_J(\beta)}_{\ell^2(J)}.
\end{align*}
Since $A_J$ is uniformly bounded from below there exists a constant $C>0$ independent of $J$ and such that $C\norm{\beta}_{\ell^2(J)}\leqslant\norm{A_J(\beta)}_{\ell^2(J)}$. Altogether this yields
\begin{equation}\label{jota}
C\norm{\beta}_{\ell^2(J)}\leqslant\norm{A_J(\beta)}_{\ell^2(J)}=\norm{P_J(\rho)}_{\ell^2(I)}\leqslant\norm{\rho}_{\ell^2(I)}\leqslant\varepsilon.
\end{equation}

Next, we look at the projection onto $I\setminus J$:
\begin{align}
\norm{P_{I\setminus J}(\rho)}_{\ell^2(I)} & = \norm{P_{I\setminus J}\left(\sum_{i\in J}\alpha_ia_i\right)+\sum_{i\in I\setminus J}\alpha_ie_i}_{\ell^2(I)}\nonumber
\\
&\geqslant\norm{\sum_{i\in I\setminus J}\alpha_ie_i}_{\ell^2(I)}-\norm{P_{I\setminus J}\left(\sum_{i\in J}\alpha_ia_i\right)}_{\ell^2(I)}. \label{pjc}
\end{align}

For the first term, noting that $\norm{P_J(\alpha)}^2_{\ell^2(I)}=\norm{E_J^*(\alpha)}_{\ell^2(J)}^2=\norm{\beta}_{\ell^2(J)}^2$, we have
\begin{align*}
\norm{\sum_{i\in I\setminus J}\alpha_ie_i}_{\ell^2(I)} & =\norm{P_{I\setminus J}(\alpha)}_{\ell^2(I)}=\norm{\alpha-(P_J)\alpha}_{\ell^2(I)}
\\
& =\sqrt{\norm{\alpha}_{\ell^2(I)}^2-\norm{P_J(\alpha)}_{\ell^2(I)}^2}=\sqrt{1-\norm{\beta}_{\ell^2(J)}^2}\\
&\geqslant 1-\norm{\beta}_{\ell^2(J)},
\end{align*}
since $\norm{\beta}_{\ell^2(J)}\leqslant\norm{\alpha}_{\ell^2(I)}=1$. 

For the second term we have
\begin{equation*}
\norm{P_{I\setminus J}\left(\sum_{i\in J}\alpha_ia_i\right)}_{\ell^2(I)}\hspace*{-0.3em}\leqslant\norm{\sum_{i\in J}\alpha_ia_i}_{\ell^2(I)}\hspace*{-0.3em}=\norm{\sum_{i\in I}P_J(\alpha)_ia_i}_{\ell^2(I)}\hspace*{-0.3em}\leqslant C_1\norm{P_J(\alpha)}_{\ell^2(I)}=C_1\norm{\beta}_{\ell^2(J)},
\end{equation*}
where, as before, $C_1$ denotes the upper bound for the Riesz basis $\{a_i\}_{i\in I}$. Finally, plugging in the estimates for both terms in \eqref{pjc} we have
\begin{equation} \label{jotap}
1-(1+C_1)\norm{\beta}_{\ell^2(J)}\leqslant\norm{P_{I\setminus J}(\rho)}_{\ell^2(I)}\leqslant\norm{\rho}_{\ell^2(I)}\leqslant\varepsilon.
\end{equation}

Combining now \eqref{jota} and \eqref{jotap} we have 
\begin{equation*}
1\leqslant\left(1+\frac{1+C_1}{C}\right)\varepsilon,
\end{equation*}
which, letting $\varepsilon \to0$, gives a contradiction. Therefore $\{a_i\}_{i\in J}\cup\{e_i\}_{i\in I\setminus J}$ must be a Riesz sequence with bounds independent of $J$. This completes the proof.
\end{proof}

\begin{definition}
Similar to the finite dimensional case we define the class $\Ww$ as the set of matrices in $\ell^2(I)$ such that all its central sub-matrices  are invertible.
\end{definition}

Therefore, in view of Theorem \ref{teo20}, we have that two Riesz bases $\{v_i\}_{i\in I}$, $\{w_i\}_{i\in I}$ in a Hilbert space $\Hh$ are woven if and only if its change of basis matrix $A$ belongs to the class $\Ww$.

This result is stronger than the one given in \cite[Proposition 7.1]{weaving} which says that if $\{v_n\}_{n\in\N}$ is a Riesz basis in $\ell^2(I)$ and its Grammian (change of basis with the canonical basis) can be written as $D+R$ where $D$ is diagonal and $2\norm{R}\leqslant\sup_n\abs{d_{n,n}}$ then $\{v_n\}_{n\in\N}$ is woven with $\{e_i\}_{i\in I}$. 

Indeed, one can construct an example which satisfies the hypothesis of Theorem \ref{teo20} but not the ones of \cite[Proposition 7.1]{weaving}: consider the set $\{v_n\}_{n\in\N}$ given by $v_1=e_1+2e_2$, $v_2=2e_1+e_2$ and $v_n=e_n$ for $n\geqslant3$. Then clearly $\{v_n\}_{n\in\N}$ is a Riesz basis and the change of basis matrix from the canonical basis $\{e_n\}_{n\in\N}$ is given by
the Gramian
\begin{equation*}
A=\left(\begin{array}{lllll}1&2&0&\dots&\dots\\2&1&0&\dots&\dots\\0&0&1&0&\dots\\\vdots&\vdots&0&\ddots&\ddots\end{array}\right)
\end{equation*}
The central matrices  of $A$ are either the identity in $\ell^2$ or $A$ itself which are both invertible. Thus $\{v_n\}_{n\in\N}$ is woven with the canonical basis $\{e_n\}_{n\in\N}$. 

On the other hand when we write $A=D+R$ with $D$ diagonal we obtain that $D=Id$ and $Re_2=2e_1$ so that $2\norm{R}>\sup_n\abs{d_{n,n}}$.

As in the finite case, there is a description of the class $\Ww$ in terms of a reconstruction result.

\begin{proposition}\label{infinito}
Assume that $A\in \Ww$ and let $J\subset I$. Then any $x\in\ell^2(I)$ can be recovered from the samples $Y_J(x)=\{(Ax)(j)\}_{j\in J}\cup\{x(j)\}_{j\in I\setminus J}$. Conversely, if $A$ is a matrix in $I$ and the maps $x\mapsto Y_J(x)$ are injective and uniformly bounded for all $J\subset I$ then $A\in\Ww$.
\end{proposition}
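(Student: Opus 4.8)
The plan is to mirror the finite-dimensional argument of Corollary~\ref{final}, but carry along the quantitative bounds that the infinite-dimensional setting demands. First I would introduce, for each $J\subset I$, the matrix $A(J)$ whose $j^{\rm th}$ row is $A^*e_j$ (equivalently, the $j^{\rm th}$ row of $A$) when $j\in J$ and is $e_j$ when $j\in I\setminus J$, so that by construction $Y_J(x)=A(J)x$ for every $x\in\ell^2(I)$. The key observation, exactly as in the proof of Theorem~\ref{teo20}, is that $A(J)$ is — up to the reordering that splits coordinates into $J$ and $I\setminus J$ — the operator that in the $\ell^2(J)\oplus\ell^2(I\setminus J)$ decomposition restricts to $E_J^*A$ on the first block and to the identity on the second; its relevant ``core'' is precisely the central sub-matrix $A_J=E_J^*AE_J$. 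So invertibility of $A(J)$ with control on the norm of the inverse is equivalent to the same for $A_J$.

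For the forward direction, assume $A\in\Ww$, i.e. (in the infinite-dimensional sense of Theorem~\ref{teo20}) every central sub-matrix $A_J$ is a uniformly bounded invertible operator. Then $A(J)$ is bounded and boundedly invertible on $\ell^2(I)$ with bounds independent of $J$: given $y=Y_J(x)=A(J)x$, one recovers $x$ by $x=A(J)^{-1}y$, where the components of $x$ indexed in $I\setminus J$ are read off directly and the components indexed in $J$ are obtained by applying $A_J^{-1}$ to the vector $\big(y_j-\sum_{i\in I\setminus J}a_{j,i}y_i\big)_{j\in J}$ — the off-diagonal correction is the same bookkeeping as in Theorem~\ref{teo20}. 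Uniform boundedness of $A_J^{-1}$ then gives uniform boundedness of $x\mapsto Y_J(x)$ in the sense required. I would phrase ``recovered'' here as: the reconstruction map $Y_J(x)\mapsto x$ is well defined and bounded uniformly in $J$, which also shows injectivity of $x\mapsto Y_J(x)$.

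For the converse, suppose each $x\mapsto Y_J(x)$ is injective and the family is uniformly bounded (meaning uniformly bounded below: there is $c>0$ with $\norm{Y_J(x)}\geqslant c\norm{x}$ for all $x$ and all $J$; I should state this hypothesis precisely, since mere injectivity is not enough in infinite dimensions to force a bounded inverse). Injectivity plus the lower bound makes $A(J)$ bounded below, and since $A(J)$ differs from a bounded operator by the identity on a direct-summand block, one checks it also has dense range — or, more cleanly, one computes directly that the lower bound on $A(J)$ transfers to a lower bound on $A_J$ on $\ell^2(J)$: taking $x$ supported on $J$ gives $\norm{A_J x}_{\ell^2(J)}=\norm{P_JY_J(x)}\geqslant$ (something controlled below), after subtracting the contribution $P_{I\setminus J}(Ax)$, which is itself bounded by $\norm{A}\norm{x}$. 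This yields $c'\norm{x}\leqslant\norm{A_J x}\leqslant\norm{A}\norm{x}$ uniformly, and surjectivity of $A_J$ follows because $A_J$ is also bounded below (hence has closed range) and its adjoint $A_J^*=(A^*)_J$ inherits a lower bound by the same argument applied to $A^*$ — note $x\mapsto Y_J$ being well behaved for $A$ does not a priori say anything about $A^*$, so here I would instead argue surjectivity of $A(J)$ from the explicit reconstruction formula and then read off surjectivity of $A_J$. Either way one concludes $A_J$ is a uniformly bounded invertible operator for every $J$, i.e. $A\in\Ww$.

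The main obstacle is the converse direction: in infinite dimensions ``injective square operator'' does not imply ``invertible'', so the statement really needs the uniform-boundedness hypothesis to be read as a two-sided bound, and the delicate point is extracting surjectivity of each $A_J$. I expect the cleanest route is to avoid adjoints and instead show that the uniform lower bound on $Y_J$ forces $A(J)$ to be boundedly invertible by exhibiting the inverse via the block formula above (the $I\setminus J$ block is trivially invertible, and the lower bound pins down the $J$ block), and only then transfer invertibility down to $A_J$; this sidesteps any closed-range/density subtleties. Apart from that, every step is the same bookkeeping with the immersions $E_J$, projections $P_J$, and the identity $A_J=E_J^*AE_J$ already established in the proof of Theorem~\ref{teo20}, so the write-up should be short.
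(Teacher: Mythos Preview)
Your forward direction is correct and amounts to an explicit version of what the paper does: the paper notes that $A\in\Ww$ makes the columns of $A$ woven with $\{e_i\}$ (this is Theorem~\ref{teo20}), so the columns of each $A(J)$ form a Riesz basis and $A(J)$ is invertible; your block formula just writes that inverse out.

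The converse is where you diverge from the paper, and your route has a real gap. You correctly isolate surjectivity of $A_J$ (equivalently, of $A(J)$) as the obstacle, but none of your proposed fixes closes it. The adjoint route you rightly discard. The ``exhibit the inverse via the block formula'' route is circular: the block formula for $A(J)^{-1}$ \emph{uses} $A_J^{-1}$, which is precisely what you are trying to produce, so you cannot invoke it to prove $A(J)$ is onto. And ``identity on a direct-summand block implies dense range'' is not justified --- having the identity on the $I\setminus J$ block says nothing about the range in the $J$ block. What the uniform lower bound on $Y_J$ \emph{does} give you (restrict to $x$ supported in $J$) is $\norm{A_J\alpha}\geqslant c\norm{\alpha}$, hence injectivity and closed range of $A_J$; in infinite dimensions this is strictly weaker than invertibility, and your sketch never bridges that gap.

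The paper avoids this problem by not attacking $A_J$ directly at all. Its converse runs: uniform two-sided bounds on $A(J)$ mean the columns of $A(J)$ form a Riesz \emph{sequence} with bounds independent of $J$; recognising these columns as a weaving of $\{a_i\}$ and $\{e_i\}$, one then invokes \cite[Theorem~5.2]{weaving}, which asserts that two Riesz bases all of whose weavings are uniformly Riesz sequences are in fact woven. That cited theorem is exactly what absorbs the completeness/surjectivity issue you are stuck on. Once $\{e_i\}$ and $\{a_i\}$ are woven, Theorem~\ref{teo20} gives $A\in\Ww$. So the missing ingredient in your plan is precisely this external lemma (or a direct replacement for it), and without it the converse does not close.
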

\begin{proof}
If $A$ is a bounded matrix in $\Ww$ and $J\subset I$, then the matrix $A(J)$ defined for $j \in I$, whose $j^{\rm th}$-row is equal to $Ae_j$ when $j\in J$ and $e_j$ when $j\in I\setminus J$ defines a bounded operator in $\ell^2(I)$ with $A(J)x=Y_J(x)$. 

Further, when $A \in \Ww$, its columns $\{a_i\}_{i\in I}$ form a woven Riesz basis with the canonical orthonormal basis $\{e_i\}_{i\in I}$, as $A$ is an invertible matrix that represents the change of bases matrix between  $\{e_i\}_{i\in I}$ and $\{a_i\}_{i\in I}$. Thus, in such case, the columns of $A(J)$ form a Riesz basis and in particular $A(J)$ is invertible so that $x=A(J)^{-1}(A(J)(x))$ for all $x\in\ell^2(I)$.

Conversely, if $x\mapsto Y_J(x)$ defines a bounded injective operator in $\ell^2(I)$ then multiplication by $A(J)$ is also a bounded injective operator in $\ell^2(I)$. If the lower and upper bounds of $Y_J$ are independent of $J$, then the bounds of $A(J)$ are independent of $J$ and therefore its columns form a Riesz sequence with uniform bounds. But from \cite[Theorem 5.2]{weaving} this is enough to claim that $\{e_i\}_{i\in I}$ and $\{a_i\}_{i\in I}$ are woven and thus its change of basis $A$ is in the class $\Ww$.
\end{proof}

Since any Hilbert space can be isomorphically linked to an $\ell^2(I)$ space through any of its Riesz bases, we can also adapt definition of woven with respect to a given basis. 

\begin{definition}
Let $T:\Hh\to\Hh$ a bounded operator on a Hilbert space $\Hh$ and let $v=\{v_i\}_{i\in I}$ be a Riesz basis of $H$. We say that $T$ is {\it weaving } with respect to $v$ if $T_v=(t_{i,j})_{i,j\in I}\in\Ww$, where
\begin{equation*}
T_vv_i=\sum_{j\in I}t_{j,i}v_j
\end{equation*}
holds for each $i\in I$. In other words $T_v$ is a matrix operator in $\ell^2(I)$ corresponding to $T$ and the basis $v$.
\end{definition}

Note that $T:\Hh\to\Hh$ is weaving with respect to a Riesz basis $v=\{v_i\}_{i\in I}$ if and only if $Tv=\{Tv_i\}_{i\in I}$ is a Riesz basis woven with $v$. From Proposition \ref{infinito} we have the following result.

\begin{corollary}
Let $T:\Hh\to\Hh$ a woven operator on a Hilbert space $\Hh$ with respect to a Riesz basis $v=\{v_i\}_{i\in I}$ of $H$.  Then for all $J\subset I$, every  $f\in\Hh$ can be recovered from $\{P_{J,v}f\}\cup\{P_{I\setminus J,v}Tf\}$, where $P_{J,v}$ denotes the projection onto $\overline{\text{span}}\{v_i:i\in J\}$. Conversely, if $T:\Hh\to\Hh$ is a bounded operator and $v=\{v_i\}_{i\in I}$ Riesz basis such that for all $J\subset I$, every  $f\in\Hh$ can be recovered from $\{P_{J,v}f\}\cup\{P_{I\setminus J,v}Tf\}$, then $T$ is weaving with respect to $v$.
\end{corollary}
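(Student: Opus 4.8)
The plan is to transfer the statement to $\ell^2(I)$ via the canonical isomorphism $D_v:\Hh\to\ell^2(I)$ sending $v_i\mapsto e_i$, and then recognize the reconstruction data as exactly the sampling map $Y_J$ from Proposition~\ref{infinito} applied to the matrix $T_v$. First I would observe that, since $D_v$ is unitary-like (an isomorphism) and carries $\overline{\text{span}}\{v_i:i\in J\}$ onto $\overline{\text{span}}\{e_i:i\in J\}=E_J(\ell^2(J))$, it intertwines the projection $P_{J,v}$ on $\Hh$ with the coordinate projection $P_J=E_JE_J^*$ on $\ell^2(I)$; that is, $D_vP_{J,v}=P_JD_v$. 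Likewise, by definition of $T_v$, we have $D_vT=T_vD_v$ as operators (the diagram $T\leftrightarrow T_v$ commutes, exactly as in the passage preceding Theorem~\ref{teo20}).

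Given these intertwining relations, setting $x=D_vf\in\ell^2(I)$, the data $\{P_{J,v}f,\ P_{I\setminus J,v}Tf\}$ corresponds under $D_v$ to $\{P_Jx,\ P_{I\setminus J}T_vx\}$, and recovering $f$ from this data is equivalent (because $D_v$ is a bounded invertible bijection) to recovering $x$ from $\{P_Jx,\ P_{I\setminus J}T_vx\}$. But the nonzero coordinates of $P_Jx$ are $\{x(j)\}_{j\in J}$ and the nonzero coordinates of $P_{I\setminus J}T_vx$ are $\{(T_vx)(j)\}_{j\in I\setminus J}$; relabeling $J\leftrightarrow I\setminus J$ (which is harmless since the hypothesis ``for all $J$'' is symmetric in $J$ and $I\setminus J$), this is precisely the sampling vector $Y_J(x)=\{(T_vx)(j)\}_{j\in J}\cup\{x(j)\}_{j\in I\setminus J}$ of Proposition~\ref{infinito}. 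Thus the first (direct) implication follows: if $T$ is weaving with respect to $v$, then $T_v\in\Ww$, so by Proposition~\ref{infinito} each $x$ is recovered from $Y_J(x)$, hence each $f$ from the given data; and for the converse, if every $f$ is recoverable from $\{P_{J,v}f,\ P_{I\setminus J,v}Tf\}$ for all $J$, then $x\mapsto Y_J(x)$ is injective for all $J$, and since $T$ is bounded the maps $Y_J$ are automatically uniformly bounded (their norms are dominated by $1+\norm{T}$ independently of $J$), so Proposition~\ref{infinito} gives $T_v\in\Ww$, i.e.\ $T$ is weaving with respect to $v$.

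The only mild subtlety — and the place I would be careful — is the uniform boundedness hypothesis in the converse direction of Proposition~\ref{infinito}: one must note that $\norm{Y_J(x)}_{\ell^2(I)}^2=\norm{P_Jx}^2+\norm{P_{I\setminus J}T_vx}^2\leqslant\norm{x}^2+\norm{T_v}^2\norm{x}^2$ with a bound independent of $J$, so the injective operators $A(J)$ arising there really are uniformly bounded, which is exactly what that proposition requires. Everything else is a bookkeeping translation through the fixed isomorphism $D_v$ and the remark, already recorded in the excerpt, that $T$ is weaving with respect to $v$ precisely when $\{Tv_i\}_{i\in I}$ is a Riesz basis woven with $\{v_i\}_{i\in I}$. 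I expect no real obstacle; the proof is essentially a restatement of Proposition~\ref{infinito} after conjugating by $D_v$.
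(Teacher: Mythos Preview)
Your proposal is correct and follows exactly the approach the paper indicates: the paper's proof consists of the single sentence ``The proof is straightforward using that $T=D_v^{-1}T_vD_v$,'' i.e.\ conjugate by $D_v$ and invoke Proposition~\ref{infinito}. You have simply unpacked that sentence, making explicit the intertwining $D_vP_{J,v}=P_JD_v$, the harmless relabeling $J\leftrightarrow I\setminus J$, and the uniform upper bound needed for the converse of Proposition~\ref{infinito}; these are precisely the details the paper suppresses.
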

\begin{proof}
The proof is straightforward using that $T=D_v^{-1}T_vD_v$ where, as before $D_v:\Hh\to\ell^2(I)$ is the coefficient map. i. e. $D_v(f)=\{c_s\}_{s\in I}$ for $f=\sum_{s\in I}c_sv_s$.
\end{proof}

To end this section we list some of the properties whose proof can be carried over from Proposition \ref{propertiesf} to the infinite dimensional case. We omit the proofs here as they are very similar.

\begin{proposition}
Let $A:\ell^2(I)\to\ell^2(I)$ be a bounded operator
\begin{enumerate}
\item $A\in\Ww$ if and only if $A$ is invertible and $A^{-1}\in\Ww$.

\item $A\in\Ww\Leftrightarrow A^t \in\Ww\Leftrightarrow A^*\in\Ww$.

\item If $A\in\Ww$ and $B$ is either a diagonal invertible or a permutation operator then $B^*AB\in\Ww$.

\item If $A\in\Ww$ and $B$ is a central sub-matrix of $A$, then $B$ is in $\Ww$.
\end{enumerate}
\end{proposition}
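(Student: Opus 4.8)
The plan is to adapt, essentially verbatim, the proofs given in Proposition~\ref{propertiesf} for the finite case, replacing "invertible matrix" by "bounded invertible operator" and being careful to track uniformity of bounds where needed. All four statements rely only on Theorem~\ref{teo20} and on elementary facts about central sub-matrices, so the structural skeleton transfers without surprises. Let $n = |I|$ (possibly infinite) throughout, and recall that by Theorem~\ref{teo20}, $A \in \Ww$ exactly when $\{e_i\}_{i\in I}$ and $\{Ae_i\}_{i\in I}$ are woven Riesz bases of $\ell^2(I)$.

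For (1), the argument is the same isomorphism trick as in the finite case: if $A \in \Ww$ then $A$ is invertible (take $J = I$), and composing the woven pair $\{e_i\}, \{Ae_i\}$ with the isomorphism $A^{-1}$ shows $\{A^{-1}e_i\}$ and $\{e_i\}$ are woven, so $A^{-1} \in \Ww$ by Theorem~\ref{teo20}; the converse direction is symmetric. For (2), one observes that a central sub-matrix $(A^t)_J$ is the transpose of $A_J$, and $(A^*)_J = (A_J)^*$; since transposition and taking adjoints preserve the property of being a uniformly bounded invertible operator with the same bounds, the characterization in Theorem~\ref{teo20} gives the equivalences immediately. For (3), when $B$ is diagonal invertible, a central sub-matrix of $B^*AB$ is a central sub-matrix of $A$ with rows and columns rescaled by the corresponding diagonal entries of $B$ and $\overline{B}$; to keep the bounds uniform one needs $B$ bounded with bounded inverse, which is exactly what "diagonal invertible" means as a bounded operator, and then linearity of the action together with $\|B\|, \|B^{-1}\| < \infty$ controls the new bounds uniformly over $J$. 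When $B$ is a permutation operator, the same case analysis by whether the swapped diagonal entries $a_{i,i}, a_{j,j}$ lie in a given central sub-matrix $\widetilde{A}_S$ goes through; since permutations are isometries, no change to the bounds occurs, and one reduces to checking invariance under a single transposition as in the finite proof. For (4), if $B = A_J$ is a central sub-matrix and $S \subset J$, then $B_S = (A_J)_S = A_S$ is itself a central sub-matrix of $A$, hence a uniformly bounded invertible operator; since the bounds for the $A_S$ are uniform over all $S \subset I$, they are in particular uniform over all $S \subset J$, so $B \in \Ww$.

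The main obstacle — and the only place where the infinite-dimensional setting genuinely differs from the finite one — is the uniformity of the constants $C_1, C_2$ in the three places where the set of central sub-matrices is transformed (parts (2), (3), and the reduction in (4)). In finite dimensions "invertible" is a closed condition with no quantitative content, but here Theorem~\ref{teo20} demands a single pair of bounds valid simultaneously for \emph{all} $J$. In each case the transformation applied to the family of central sub-matrices is uniformly bounded with uniformly bounded inverse (transposition and adjoint are isometric; conjugation by a fixed diagonal invertible $B$ multiplies bounds by $\|B\|^{\pm 1}, \|B^{-1}\|^{\pm 1}$; conjugation by a permutation is isometric; restricting the index family only shrinks the collection), so the uniform bounds are preserved up to a fixed multiplicative constant. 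I would state this uniformity observation once at the start and then let the four parts follow the finite-case proofs mutatis mutandis; this is why the authors can legitimately write "we omit the proofs here as they are very similar."
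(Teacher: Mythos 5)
Your proposal is correct and matches the paper's intent exactly: the paper omits this proof, stating that the arguments of Proposition~\ref{propertiesf} carry over to the infinite-dimensional setting, and you have carried them over while correctly isolating the one genuinely new issue, namely the uniformity over $J$ of the invertibility bounds demanded by Theorem~\ref{teo20}. One small caution for part (3): over an infinite index set an arbitrary permutation is not a finite product of transpositions, so you should rely on your direct observation that $(B^*AB)_S$ is isometrically equivalent to $A_{\sigma(S)}$ (permutations being isometries) rather than on the finite-case reduction to a single swap.
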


From $(4)$ we obtain that if $\{v_i\}_{i\in I}$ is a Riesz basis woven with the canonical basis $\{e_i\}_{i\in I}$, then for all $J\subset I$, $\{(E_J^*)v_i\}_{i\in J}$ and $\{e_i\}_{i\in J}$ are woven in $\ell^2(J)$ where, as in the proof of Theorem \ref{teo20}, $E_J^*:\ell^2(I)\to\ell^2(J)$ is the restriction. 
In fact, in such case the change of basis matrix from $\{E_J^*v_i\}_{i\in J}$ to $\{e_i\}_{i\in J}$ is a sub-matrix of the one from $\{v_i\}_{i\in I}$ to $\{e_i\}_{i\in I}$. 

Actually more can be said. In general if two frames $\{v_i\}_{i\in I}$ and $\{w_i\}_{i\in I}$ are woven in a Hilbert space $\Hh$ and $T:\Hh\to\Hh$ is a bounded operator with closed range then $\{Tv_i\}_{i\in I}$ and $\{Tw_i\}_{i\in I}$ are woven in $T(\Hh)$.

\section{Weaving in SIS}\label{SIS}
We now show a result about woven Riesz basis in shift-invariant spaces.

\begin{definition}
We say that a closed subspace $V\subset L^2(\R^d)$ is shift-invariant if $T_kf(\cdot):=f(\cdot-k)\in V$ for all $f\in V$ and $k\in\Z^d$.

A subset of functions $\Phi\subset V$ is said to be a Riesz generator set of $V$ if $\{T_k\phi: \phi\in\Phi, k\in\Z^d\}$ is a Riesz basis of $V$.
\end{definition}

\begin{proposition}
The map $\tau:L^2(\R^d)\to L^2(\T^d ,\ell^2(\Z^d))$ defined by 
\begin{equation*}
\tau f(\omega):=\{\widehat{f}(\omega+k)\}_{k\in\Z^d}
\end{equation*}
is an isometric isomorphism.
\end{proposition}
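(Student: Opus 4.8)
The plan is to verify directly that $\tau$ is a well-defined, bounded, bijective map that preserves norms, building on the fact that $\tau$ is essentially a repackaging of the Fourier transform together with the periodization that identifies $L^2(\R^d)$ with $L^2(\T^d,\ell^2(\Z^d))$. First I would recall that the Fourier transform $\Ff:L^2(\R^d)\to L^2(\R^d)$ is a unitary isomorphism, so it suffices to show that the map $g\mapsto\{g(\omega+k)\}_{k\in\Z^d}$ from $L^2(\R^d)$ to $L^2(\T^d,\ell^2(\Z^d))$ is an isometric isomorphism; composing with $\Ff$ then gives $\tau$.

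The key computation is the norm identity. For $g\in L^2(\R^d)$, tile $\R^d$ by the unit cubes $k+[0,1)^d$, $k\in\Z^d$, so that
\begin{equation*}
\norm{g}_{L^2(\R^d)}^2=\sum_{k\in\Z^d}\int_{[0,1)^d}\abs{g(\omega+k)}^2\d\omega=\int_{\T^d}\sum_{k\in\Z^d}\abs{g(\omega+k)}^2\d\omega=\int_{\T^d}\norm{\{g(\omega+k)\}_{k\in\Z^d}}_{\ell^2(\Z^d)}^2\d\omega,
\end{equation*}
where the interchange of sum and integral is justified by Tonelli's theorem since the integrand is non-negative. This shows the map is a well-defined isometry onto its image. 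For surjectivity, given $F\in L^2(\T^d,\ell^2(\Z^d))$ with $F(\omega)=\{F_k(\omega)\}_{k\in\Z^d}$, define $g$ on $\R^d$ by $g(\omega+k):=F_k(\omega)$ for $\omega\in[0,1)^d$; the same tiling identity shows $g\in L^2(\R^d)$ with $\norm{g}_{L^2(\R^d)}=\norm{F}_{L^2(\T^d,\ell^2(\Z^d))}$, and by construction its periodization-packaging is $F$. Finally, linearity is immediate, so $\tau=(\,g\mapsto\{g(\omega+k)\}\,)\circ\Ff$ is a linear isometric bijection, hence an isometric isomorphism.

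There is no serious obstacle here; the only point requiring a little care is the measurability and the Fubini/Tonelli justification for exchanging the sum over $k$ with the integral over the fundamental domain, together with checking that the pointwise-defined inverse indeed lands in $L^2(\T^d,\ell^2(\Z^d))$ (i.e. that $\omega\mapsto F(\omega)$ being in $L^2(\T^d,\ell^2(\Z^d))$ is exactly the condition that the glued function $g$ is square-integrable on $\R^d$). Both are standard consequences of the monotone convergence / Tonelli theorem applied to the non-negative function $(\omega,k)\mapsto\abs{g(\omega+k)}^2$ on $\T^d\times\Z^d$, so the proof is essentially the displayed identity above.
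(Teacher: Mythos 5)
Your proof is correct. The paper states this proposition without any proof (it is a standard fact from the theory of shift-invariant spaces, cf.\ the cited work of Bownik), so there is nothing to compare against; your factorization of $\tau$ as the Plancherel unitary followed by the unfolding map $g\mapsto\{g(\omega+k)\}_{k\in\Z^d}$, with the tiling/Tonelli identity giving the isometry and the explicit gluing giving surjectivity, is exactly the standard argument. The one point you flag but do not carry out, strong measurability of $\omega\mapsto\{g(\omega+k)\}_{k\in\Z^d}$ as an $\ell^2(\Z^d)$-valued function, follows from componentwise measurability together with separability of $\ell^2(\Z^d)$ (Pettis), so no gap remains.
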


We will denote by $\widehat{V}=\tau V$, the image of a shift-invariant space $V$ under $\tau$.

\begin{definition}
A range function is a mapping
\begin{equation*}
J:\T^d\to\{\text{ closed subspaces of }\ell^2(\Z^d)\}
\end{equation*}
that is measurable in the sense that its projections are weakly measurable functions.

Given a shift invariant space $V$, its associated range function is $J_V(\omega)=\{\tau f(\omega): f\in V\}$. When there is no confusion we will just denote this function by $J$.
\end{definition}
For a comprehensive  study about characterizations of SIS using range functions see \cite{bow} and references therein.

\subsection{Weaving Riesz generators}\label{Riesz generators}
In this section we give conditions in order that the weavings of two  Riesz generator sets of a SIS preserve the property of being  a Riesz generator set.
\begin{definition}
Let $V\subset L^2(\R^d)$ be a shift invariant space and suppose it admits a Riesz generator set. If $\{\phi_1, \dots, \phi_n\}$ and $\{\psi_1, \dots, \psi_n\}$ are two Riesz generator sets, we say that they are woven if for all $I\subset\{1, \dots, n\}$ the set $\{\phi_i\}_{i\in I}\cup\{\psi_i\}_{i\notin I}$ is a Riesz generator set.
\end{definition}

\begin{proposition}
In the same context as above, $\{\phi_1 , \dots,  \phi_n\}$ and $\{\psi_1 , \dots,  \psi_n\}$ are woven if and only if there exists $A,B>0$ such that $\{\tau\phi_1(\omega), \dots, \tau\phi_n(\omega)\}$ and $\{\tau\psi_1(\omega), \dots,  \tau\psi_n (\omega)\}$ are woven in $J(\omega)$ with frame bounds $A$ and $B$ for almost every $\omega$.
\end{proposition}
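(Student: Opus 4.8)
The plan is to transfer the weaving condition from the shift-invariant space $V$ to the fibers $J(\omega)$ using the isometric isomorphism $\tau$, relying on the standard fiberization dictionary from \cite{bow} that translates statements about $\Z^d$-invariant Riesz bases of translates into pointwise (in $\omega$) statements about Riesz bases of the fibers $J(\omega)$ with uniform bounds. Concretely, for a fixed $I\subset\{1,\dots,n\}$, the set $\{\phi_i\}_{i\in I}\cup\{\psi_i\}_{i\notin I}$ is a Riesz generator set of $V$ if and only if $\{\tau\phi_i(\omega)\}_{i\in I}\cup\{\tau\psi_i(\omega)\}_{i\notin I}$ is a Riesz basis of $J(\omega)$ for a.e.\ $\omega$, with Riesz bounds that can be taken independent of $\omega$ (this is exactly the fiberization characterization of Riesz generator sets). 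The only subtlety is passing from ``for each $I$, uniform-in-$\omega$ bounds'' to ``uniform-in-$(I,\omega)$ bounds'', but since $I$ ranges over the finite collection of subsets of $\{1,\dots,n\}$, taking the minimum of the lower bounds and maximum of the upper bounds over these $2^n$ choices gives global constants $A,B>0$. This handles both directions simultaneously.

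First I would state the fiberization fact precisely: a family $\{\eta_1,\dots,\eta_n\}\subset V$ is a Riesz generator set of $V$ with bounds $A,B$ if and only if $\{\tau\eta_1(\omega),\dots,\tau\eta_n(\omega)\}$ is a Riesz basis (equivalently, a Riesz sequence spanning $J(\omega)$) of $J(\omega)$ with bounds $A,B$ for a.e.\ $\omega$; this is precisely where \cite{bow} is invoked. Then, for the forward direction, assume the two generator sets are woven; for each of the finitely many $I\subset\{1,\dots,n\}$ apply the fiberization fact to $\{\phi_i\}_{i\in I}\cup\{\psi_i\}_{i\notin I}$ to get bounds $A_I,B_I$ valid for a.e.\ $\omega$; set $A=\min_I A_I$, $B=\max_I B_I$, and discard the (finite) union of null sets on which any of these fails, obtaining a single full-measure set of $\omega$ on which all $2^n$ fiber-weavings hold with bounds $A,B$ — i.e.\ $\{\tau\phi_i(\omega)\}$ and $\{\tau\psi_i(\omega)\}$ are woven in $J(\omega)$ with bounds $A,B$ a.e.

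For the converse, assume the fibers are woven in $J(\omega)$ with uniform bounds $A,B$ for a.e.\ $\omega$. Fix $I\subset\{1,\dots,n\}$; then for a.e.\ $\omega$ the family $\{\tau\phi_i(\omega)\}_{i\in I}\cup\{\tau\psi_i(\omega)\}_{i\notin I}$ is a Riesz basis of $J(\omega)$ with bounds $A,B$, so by the fiberization fact $\{\phi_i\}_{i\in I}\cup\{\psi_i\}_{i\notin I}$ is a Riesz generator set of $V$. Since $I$ was arbitrary, the two generator sets are woven. The main obstacle, such as it is, is purely bookkeeping: making sure that the ``a.e.\ $\omega$'' exceptional sets are handled correctly when quantifying over $I$ (finitely many, so no issue) and that the fiberization dictionary is quoted in the exact form needed — that a finite $\Z^d$-translation-generating family corresponds fiberwise to a Riesz basis of $J(\omega)$ with the same bounds. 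No delicate estimate is required beyond what \cite{bow} already supplies; the content of the proposition is essentially that weaving, being a finite conjunction of Riesz-basis conditions, fiberizes termwise.
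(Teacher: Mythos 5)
Your proposal is correct and follows exactly the route the paper takes: the paper's proof is the one-line remark that the statement ``follows from the definition and \cite[Theorem 2.3]{bow}'', i.e.\ the fiberization characterization of Riesz generator sets applied to each of the finitely many weavings, with the uniformity in $(I,\omega)$ obtained just as you describe. You have simply written out the bookkeeping that the paper leaves implicit.
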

\begin{proof}
This follows from the definition and \cite[Theorem 2.3]{bow}.
\end{proof}

Now, due to Theorem \ref{elteo}, we know that $\{\tau\phi_1(\omega), \dots, \tau\phi_n(\omega)\}$ and $\{\tau\psi_1(\omega), \dots, \tau\psi_n(\omega)\}$ are woven in $J(\omega)$ if and only if its change of basis matrix $A_\omega\in\Ww_n$. There is a natural way to lift this matrix to a linear map in $J(\omega)$: define the change of basis map as
\begin{equation*}
R_\omega:J(\omega)\to J(\omega)\text{ with }R_\omega(\tau\phi_i(\omega))=\tau\psi_i(\omega), i=1, \dots, n,
\end{equation*}
and coordinate map for $\Phi$ as
\begin{equation*}
T_\omega^\Phi:J(\omega)\to\C^n\text{ with }T_\omega^\Phi(\tau\phi_i(\omega))=e_i, i=1, \dots, n.
\end{equation*}
Then the following diagram commutes
\begin{equation*}
\begin{tikzcd}
J(\omega)\arrow{r}{R_\omega}\arrow[swap]{d}{T_\omega^\Phi}&J(\omega)\arrow{d}{T_\omega^\Phi}
\\
\C^n\arrow{r}{A_\omega}&\C^n
\end{tikzcd}
\end{equation*}
Further lifting to $\widehat{V}$ we can define $\widetilde{R}:\widehat{V}\to\widehat{V}$ by
\begin{equation*}
\widetilde{R}\widetilde{f}(\omega):=R_\omega(\widetilde{f}(\omega))
\end{equation*}
for each $\widetilde{f}\in\widetilde{V}$ and $\omega\in\T^d$. Finally, going back using $\tau$, this defines an operator $R:V\to V$ with $\tau(Rf)=\widetilde{R}\widetilde{f}$. Thus $R$ commutes with all translates $T_k$, $k\in\Z^d$. Further, note that it is the only one with this property such that $R\phi_i=\psi_i$ for all $i=1, \dots, n$.

\subsection{Weaving Riesz bases of translations}

In the previos section,
we gave necessary and sufficient conditions in order that two Riesz generator sets  of a SIS are woven.
The weavings in that case interchange the generators with all their translations. In this section we will look at the general case where we allow to interchange any translate
of a generator with the translate of a possibly different generator. That is, we study when two Riesz bases of a SIS, given by the translations of generator sets in a SIS, $\{t_k\phi_j\}_{k,j}$ and $\{t_k\psi_j\}_{k,j}$ are woven as Riesz basis.

Riesz generator sets can be characterized through their Grammians as shown in \cite{structure}. Suppose $\Phi=\{\phi_1, \dots, \phi_n\}$ is a finite set of functions in $L^2(\R^d)$ and let $G_\Phi:[-1/2,1/2)^d\to\C^{n\times n}$ be given by
\begin{equation*}
G_\Phi(\zeta)_{j,l}=\sum_{m\in\Z^d}\widehat{\phi_j}(\zeta+m)\overline{\widehat{\phi_l}(\zeta+m)}
\end{equation*}
for each $j,l=1, \dots, n$. Thus, $G_\Phi(\zeta)$ is a positive semidefinite matrix since
\begin{equation*}
\scal{v,G_\Phi(\zeta)v}=\sum_{m\in\Z^d}\abs{\sum_{j=1}^nv_j\widehat{\phi_j}(\zeta+m)}^2
\end{equation*}
holds for all $v=(v_1, \dots, v_n)\in\C^n$ and almost all $\zeta\in[-1/2,1/2)^d$. Further, it can be proved \cite{structure} that $\Phi$ is a Riesz generator of $\overline{\text{span}}\{t_k\phi_j:k\in\Z^d, j=1, \dots, n\}$ if and only if $G_\Phi(\zeta)$ is positive definite with uniform bounds, i.e. there exists $A, B > 0$ such that
\begin{equation*}
A\norm{v}^2\leqslant\scal{v,G_\Phi(\zeta)v}\leqslant B\norm{v}^2
\end{equation*}
holds for all $v\in\C^n$ and almost all $\zeta\in[-1/2,1/2)^d$. When $\Phi$ has only one element $\phi$ this last condition reads as
\begin{equation*}
A\leqslant\sum_{m\in\Z^d}\abs{\widehat{\phi}(\zeta)}^2\leqslant B\quad\text{a.e. }\zeta\in[-1/2,1/2)^d.
\end{equation*}

We are now ready to prove our wieving result in this context. For simplicity, we only prove the case of generator sets with one element and in $L^2(\R)$ as the proof of the more general case is similar.

\begin{proposition}\label{pwn}
Let $V\subset L^2(\R)$ be a shift-invariant space with a single Riesz generator $\phi\in L^2(\R)$ with lower and upper Riesz bounds $A$ and $B$ respectively. 
If $\psi\in V$ is such that there exists a constant $\mu>0$ for which
\begin{equation*}
\sum_{k\in\Z}\abs{\widehat{\phi}(\zeta+k)-\widehat{\psi}(\zeta+k)}^2\leqslant\mu<A<\sum_{k\in\Z}\abs{\widehat{\phi}(\zeta+k)}^2\quad\text{a.e. }\zeta\in[-1/2,1/2),
\end{equation*}
then $\{t_k\psi:k\in\Z\}$ is a Riesz basis woven with $\{t_k\phi:k\in\Z\}$.
\end{proposition}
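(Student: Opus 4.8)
The plan is to transport everything to $\ell^2(\Z)$ through the coefficient isomorphism of the Riesz basis $\{t_k\phi:k\in\Z\}$ and then invoke Theorem~\ref{teo20}; the crucial observation will be that the change of basis matrix is a convolution (Laurent) operator whose distance to the identity is strictly less than $1$. Since $\psi\in V$ we may write $\psi=\sum_{k\in\Z}c_kt_k\phi$ with $(c_k)_{k\in\Z}\in\ell^2(\Z)$, and the standard fiberization of shift-invariant spaces gives $\widehat{\psi}(\xi)=m(\xi)\widehat{\phi}(\xi)$ for a.e.\ $\xi$, where $m(\xi):=\sum_{k\in\Z}c_ke^{-2\pi ik\xi}$ is $1$-periodic. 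Let $D_\phi\colon V\to\ell^2(\Z)$ be the isomorphism with $D_\phi(t_k\phi)=e_k$; as in the proof of Theorem~\ref{teo20} it carries $\{t_k\psi\}_{k\in\Z}$ onto the columns of the change of basis matrix $\Lambda=(c_{i-j})_{i,j\in\Z}$, which is exactly the convolution operator $\Lambda\alpha=c*\alpha$, i.e.\ multiplication by $m$ on the Fourier side of $\ell^2(\Z)$.

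Next I would translate the hypothesis. Since $\widehat{\phi}(\zeta+k)-\widehat{\psi}(\zeta+k)=(1-m(\zeta))\widehat{\phi}(\zeta+k)$, the assumed inequality reads $\abs{1-m(\zeta)}^2\sum_{k\in\Z}\abs{\widehat{\phi}(\zeta+k)}^2\leqslant\mu$ for a.e.\ $\zeta\in[-1/2,1/2)$; combining this with the lower Riesz bound $\sum_{k\in\Z}\abs{\widehat{\phi}(\zeta+k)}^2\geqslant A$ and with $\mu<A$ yields $\norm{1-m}_{L^\infty}\leqslant\sqrt{\mu/A}=:\delta<1$. Consequently $\norm{\Lambda-I}_{\ell^2(\Z)\to\ell^2(\Z)}=\norm{1-m}_{L^\infty}\leqslant\delta<1$, so $\Lambda$ is invertible on $\ell^2(\Z)$; hence $\{t_k\psi\}_{k\in\Z}$, being the image of the Riesz basis $\{t_k\phi\}_{k\in\Z}$ under the bounded invertible operator $D_\phi^{-1}\Lambda D_\phi$, is itself a Riesz basis of $V$, so Theorem~\ref{teo20} applies.

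By Theorem~\ref{teo20} it then remains to check that $\Lambda\in\Ww$, i.e.\ that all its central submatrices $\Lambda_S=E_S^*\Lambda E_S$ ($S\subset\Z$) are invertible with bounds independent of $S$, where $E_S\colon\ell^2(S)\to\ell^2(\Z)$ is the inclusion and $I_S$ the identity of $\ell^2(S)$. Since $E_S$ is an isometry and $E_S^*E_S=I_S$, one has $\Lambda_S-I_S=E_S^*(\Lambda-I)E_S$, whence $\norm{\Lambda_S-I_S}\leqslant\norm{\Lambda-I}\leqslant\delta<1$. Therefore each $\Lambda_S$ is invertible with $\norm{\Lambda_S}\leqslant1+\delta$ and $\norm{\Lambda_S^{-1}}\leqslant(1-\delta)^{-1}$, uniformly in $S$, so $\Lambda\in\Ww$; since being woven is invariant under the isomorphism $D_\phi$, we conclude that $\{t_k\phi:k\in\Z\}$ and $\{t_k\psi:k\in\Z\}$ are woven.

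The only genuinely non-routine step is the identification of the change of basis operator with the convolution operator of symbol $m$, together with the computation $\norm{\Lambda-I}=\norm{1-m}_{L^\infty}$ and its bound $\sqrt{\mu/A}$ coming from the hypothesis; this is precisely where the strict inequality $\mu<A$ is used. Once $\norm{\Lambda-I}<1$ is in hand, the passage to central submatrices and the appeal to Theorem~\ref{teo20} are immediate. (The general case of a set $\Phi=\{\phi_1,\dots,\phi_n\}$ of generators in $L^2(\R^d)$ runs along the same lines, with $m$ replaced by an $n\times n$ matrix-valued $1$-periodic multiplier $M$, the scalar $\abs{1-m(\zeta)}$ by $\norm{\mathrm{Id}-M(\zeta)}$, and the lower Riesz bound provided by the Gramian characterization of Riesz generator sets recalled above.)
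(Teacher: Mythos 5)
Your proof is correct, but it takes a genuinely different route from the paper's. The paper never touches the change of basis matrix: it estimates $\norm{\sum_k c_k(t_k\phi-t_k\psi)}^2\leqslant\mu\norm{c}_{\ell^2}^2$ directly via Plancherel and periodization, invokes Christensen's perturbation theorem (stated just before the proposition) with $\mu<A$ to conclude that $\{t_k\psi\}_{k\in\Z}$ is a frame of $V$, observes that the same estimate survives restricting the sum to any $I\subset\Z$ so that every weaving $\{t_k\psi\}_{k\in I}\cup\{t_k\phi\}_{k\in\Z\setminus I}$ is a frame, and finally upgrades frames to Riesz bases by citing Theorems 5.3 and 5.4 of the weaving paper. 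You instead exploit $\psi\in V$ to write $\widehat{\psi}=m\widehat{\phi}$ with $m$ the $1$-periodic symbol of the Laurent change of basis operator $\Lambda=(c_{i-j})$, convert the hypothesis into $\norm{1-m}_{L^\infty}\leqslant\sqrt{\mu/A}<1$, and feed the resulting uniform bound $\norm{\Lambda_S-I_S}=\norm{E_S^*(\Lambda-I)E_S}\leqslant\norm{\Lambda-I}<1$ on all central submatrices into Theorem~\ref{teo20}. Both arguments are sound, and you correctly handle the one logical prerequisite of Theorem~\ref{teo20}, namely that $\{t_k\psi\}_{k\in\Z}$ must first be shown to be a Riesz basis (which you get from the invertibility of $\Lambda$ via the Neumann series). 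What your route buys is a direct connection to the paper's own characterization and explicit, uniform Neumann-series bounds $(1\pm\delta)$ for every central submatrix, hence explicit Riesz bounds for all weavings; what the paper's route buys is brevity and a proof that passes essentially verbatim to the multi-generator case in $L^2(\R^d)$, where your symbol $m$ becomes a matrix-valued multiplier and the pointwise factorization is slightly less clean.
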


In order to prove this proposition we will use the following perturbation result.

\begin{theorem}[\cite{Christ}]
Let $\{x_k\}_{k\in\N}$ be a frame of a Hilbert space $\Hh$ with lower constant $A$ and let $\{y_k\}_{k\in\N}$ be a sequence in $\Hh$. Suppose there exists $\mu<A$ such that
\begin{equation}\label{Christensen}
\norm{\sum_{k\in N}c_k(x_k-y_k)}^2_\Hh\leqslant\mu\norm{c}^2_{\ell^2}
\end{equation}
holds for all finite sequences $c=(c_k)_{k\in\N}$. Then $\{y_k\}$ is a frame for $\Hh$.
\end{theorem}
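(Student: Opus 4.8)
The plan is to recast the perturbation hypothesis \eqref{Christensen} as an operator-norm estimate between two synthesis operators and then transfer the lower frame bound of $\{x_k\}$ to $\{y_k\}$ by a single reverse–triangle–inequality. First I would introduce the synthesis operator $U\colon\ell^2\to\Hh$ of the frame, $Uc=\sum_k c_kx_k$, together with the map $V$ defined on finitely supported sequences by $Vc=\sum_k c_ky_k$. The frame condition for $\{x_k\}$ says exactly that $U$ is bounded with $\norm{U}\leqslant\sqrt{B}$ (where $B$ is the upper frame bound) and that its adjoint, the analysis operator, satisfies $\norm{U^*f}^2=\sum_k\abs{\scal{f,x_k}}^2\geqslant A\norm{f}^2$; in other words $U^*$ is bounded below by $\sqrt{A}$.

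Next I would observe that \eqref{Christensen} is precisely the statement $\norm{(U-V)c}\leqslant\sqrt{\mu}\,\norm{c}$ for every finite sequence $c$, so $U-V$ extends to a bounded operator on $\ell^2$ of norm at most $\sqrt{\mu}$. Hence $V=U-(U-V)$ also extends to a bounded operator on $\ell^2$, which already shows $\{y_k\}$ is Bessel, and one checks directly that $(V^*f)_k=\scal{f,y_k}$, so that $\norm{V^*f}^2=\sum_k\abs{\scal{f,y_k}}^2$ is exactly the quantity we must bound from above and below.

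The heart of the argument is then the estimate, valid for every $f\in\Hh$,
\begin{equation*}
\norm{V^*f}\geqslant\norm{U^*f}-\norm{(U-V)^*f}\geqslant\sqrt{A}\,\norm{f}-\sqrt{\mu}\,\norm{f}=(\sqrt{A}-\sqrt{\mu})\norm{f},
\end{equation*}
where I use $\norm{(U-V)^*}=\norm{U-V}\leqslant\sqrt{\mu}$. Since $\mu<A$ the factor $\sqrt{A}-\sqrt{\mu}$ is strictly positive, and squaring produces the lower frame bound $\sum_k\abs{\scal{f,y_k}}^2\geqslant(\sqrt{A}-\sqrt{\mu})^2\norm{f}^2$. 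The upper bound comes from the same splitting with the ordinary triangle inequality, yielding $\sum_k\abs{\scal{f,y_k}}^2\leqslant(\sqrt{B}+\sqrt{\mu})^2\norm{f}^2$, and together these show $\{y_k\}$ is a frame.

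The only genuinely delicate point — and thus the one I would handle most carefully — is the bookkeeping with adjoints: confirming that the analysis operator of $\{y_k\}$ is exactly $V^*$, and that the norm bound on $U-V$, which is a priori only controlled on finitely supported sequences, really governs $\norm{(U-V)^*}$ on all of $\Hh$. Both points reduce to the density of finite sequences in $\ell^2$ and the identity $\norm{W}=\norm{W^*}$; once these are in place there is no serious analytic obstacle, which is exactly why the condition $\mu<A$ passes through so transparently to give strictly positive frame bounds.
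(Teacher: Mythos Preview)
The paper does not actually prove this theorem: it is quoted verbatim from Christensen~\cite{Christ} and used as a black box in the proof of Proposition~\ref{pwn}. So there is no ``paper's own proof'' to compare against.

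That said, your argument is correct and is essentially the standard proof of this perturbation result. The synthesis operator $U$ is bounded and surjective with $\norm{U^*f}\geqslant\sqrt{A}\,\norm{f}$; hypothesis~\eqref{Christensen} bounds $\norm{U-V}$ by $\sqrt{\mu}$ on a dense subspace and hence on all of $\ell^2$; the reverse triangle inequality on the adjoints gives the lower bound $(\sqrt{A}-\sqrt{\mu})^2$ and the direct triangle inequality gives the upper bound $(\sqrt{B}+\sqrt{\mu})^2$. Your closing remarks about density and $\norm{W}=\norm{W^*}$ correctly identify the only points that need checking, and both are routine.
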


\begin{proof}[Proof of Proposition \ref{pwn}]
To compare $\{t_k\psi:k\in\Z\}$ with the Riesz basis $\{t_k\phi:k\in\Z\}$ we take $c=(c_1,\dots,c_n)$ and compute
\begin{align*}
\norm{\sum_{k=1}^nc_k (t_k\phi-t_k\psi)}^2&=\bigintss_\R\abs{\sum_{k=1}^nc_k(\phi(\zeta-k)-\psi(\zeta-k)}^2\d\zeta
\\
&=\bigintss_\R\abs{\sum_{k=1}^nc_ke^{-2\pi i k\zeta}}^2\abs{\widehat{\phi}(\zeta)-\widehat{\psi}(\zeta)}^2\d\zeta.
\end{align*}

by using Plancharel's Theorem. Next, decomposing the integral into equal intervals we get
\begin{align*}
\bigintss_\R\abs{\sum_kc_ke^{-2\pi i k\zeta}}^2&\abs{\widehat{\phi}(\zeta)-\widehat{\psi}(\zeta)}^2\d\zeta=\sum_{n\in\Z}\bigintss_{n-1/2}^{n+1/2}\abs{\sum_kc_ke^{-2\pi i k\zeta}}^2\abs{\widehat{\phi}(\zeta)-\widehat{\psi}(\zeta)}^2\d\zeta
\\
&=\sum_{m\in\Z}\bigintss_{-1/2}^{1/2}\abs{\sum_kc_ke^{-2\pi i k\zeta}}^2\abs{\widehat{\phi}(\zeta+m)-\widehat{\psi}(\zeta+m)}^2\d\zeta
\\
&=\bigintss_{-1/2}^{1/2}\abs{\sum_kc_ke^{-2\pi i k\zeta}}^2\sum_{m\in\Z}\abs{\widehat{\phi}(\zeta+m)-\widehat{\psi}(\zeta+m)}^2\d\zeta
\\
&\leqslant\mu\bigintss_{-1/2}^{1/2}\abs{\sum_kc_ke^{-2\pi i k\zeta}}^2\d\zeta=\mu\norm{c}^2_{\ell^2}
\end{align*}
where in the last line, the inequality  follows by hypothesis and the equality from the fact that $\{e^{-2\pi i k\zeta}\}_k$ is an orthonormal basis of $L^2(-1/2,1/2)$. 

Applying (\ref{Christensen}) and its conclusion,  we obtain that $\{t_k\psi:k\in\Z\}$ is a frame of $V$. 

Further, the same computation shows that for any chosen set $I\subset\Z$, $\{t_k\psi:k\in I\}\cup\{t_k\phi:k\in\Z\setminus I\}$ is also frame of $V$. 

Hence, $\{t_k\phi:k\in\Z\}$ and $\{t_k\psi:k\in\Z\}$ are woven frames in $V$. But since $\{t_k\phi:k\in\Z\}$ is actually a Riesz basis, \cite[Theorem 5.4]{weaving} tells us that $\{t_k\psi:k\in\Z\}$ is actually a Riesz basis -and so are all the weaving choices between $\{t_k\psi:k\in\Z\}$ and $\{t_k\phi:k\in\Z\}$, cf. \cite[Theorem 5.3]{weaving}.
\end{proof}

We now apply the result to the case of a Paley-Wiener space.
\begin{corollary}
Let
\begin{equation*}
PW_{1/2}=\{f\in L^2(\R):\widehat{f}(\zeta)=0\text{ if }\zeta\notin[-1/2,1/2)\}
\end{equation*}
be the classic Paley-Wiener space. If $\psi\in PW_{1/2}$ is such that $\abs{1-\widehat{\psi}(\zeta)}<1$ a.e. $\zeta\in[-1/2,1/2)$, then $\{t_k\psi:k\in\Z\}$ is a Riesz basis of $PW_{1/2}$ which is woven with the orthonormal basis $\{t_k\text{sinc}:k\in\Z\}$ ($\text{sinc}(x)=\frac{\sin\pi x}{\pi x}$).

In particular if $\{t_k\phi:k\in\Z\}$ is a Riesz basis of $PW_{1/2}$ with $A<\abs{\widehat{\phi}(\zeta)}^2<B$ a.e. $\zeta\in[-1/2,1/2)$ and for some constants $A,B>0$ with $\max(B-1,1-A)<1$, then $\{t_k\phi:k\in\Z\}$ is woven with $\{t_k\text{sinc}:k\in\Z\}$
\end{corollary}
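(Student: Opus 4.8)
The statement is entirely a corollary of Proposition~\ref{pwn} applied with the single generator $\phi=\text{sinc}$. Recall that $\widehat{\text{sinc}}=\chi_{[-1/2,1/2)}$, so by the classical sampling theorem $\{t_k\text{sinc}:k\in\Z\}$ is the canonical orthonormal basis of $PW_{1/2}$; in particular both of its optimal Riesz bounds equal $1$ and $\sum_{m\in\Z}\abs{\widehat{\text{sinc}}(\zeta+m)}^2=1$ for a.e.\ $\zeta\in[-1/2,1/2)$. The plan is: first simplify the perturbation series of Proposition~\ref{pwn} by using that every function in sight is band-limited to $[-1/2,1/2)$, then invoke that proposition, and finally read off the ``in particular'' part as the special case $\psi=\phi$.

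For the first assertion I would take $\psi$ itself as a perturbation of $\text{sinc}$. Since $\psi\in PW_{1/2}$, for a.e.\ $\zeta\in[-1/2,1/2)$ one has $\widehat{\text{sinc}}(\zeta+m)=\widehat{\psi}(\zeta+m)=0$ for every $m\neq0$, and hence
\begin{equation*}
\sum_{m\in\Z}\abs{\widehat{\text{sinc}}(\zeta+m)-\widehat{\psi}(\zeta+m)}^2=\abs{1-\widehat{\psi}(\zeta)}^2 .
\end{equation*}
The hypothesis $\abs{1-\widehat{\psi}(\zeta)}<1$ a.e.\ — which should be read, as is in any case needed for $\{t_k\psi\}$ to be a Riesz basis at all, as the uniform bound $\mu:=\esssup_\zeta\abs{1-\widehat{\psi}(\zeta)}^2<1$ — then gives $\abs{1-\widehat{\psi}(\zeta)}^2\leqslant\mu<1$ a.e. Choosing any $A'\in(\mu,1)$, which is a (possibly non-optimal) lower Riesz bound for $\{t_k\text{sinc}\}$, we obtain
\begin{equation*}
\sum_{m\in\Z}\abs{\widehat{\text{sinc}}(\zeta+m)-\widehat{\psi}(\zeta+m)}^2\leqslant\mu<A'<\sum_{m\in\Z}\abs{\widehat{\text{sinc}}(\zeta+m)}^2=1
\end{equation*}
a.e.\ in $\zeta$, which is precisely the hypothesis of Proposition~\ref{pwn}. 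It follows that $\{t_k\psi:k\in\Z\}$ is a Riesz basis of $PW_{1/2}$ woven with $\{t_k\text{sinc}:k\in\Z\}$.

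For the ``in particular'' statement I would apply the first assertion with $\psi:=\phi$, so it only remains to verify that $\esssup_\zeta\abs{1-\widehat{\phi}(\zeta)}<1$. Since $\{t_k\phi\}$ is a Riesz basis of $PW_{1/2}$ we have $\phi\in PW_{1/2}$, so $\widehat{\phi}$ vanishes off $[-1/2,1/2)$ and there $\sum_m\abs{\widehat{\phi}(\zeta+m)}^2=\abs{\widehat{\phi}(\zeta)}^2\in(A,B)$ a.e.; combined with $\max(B-1,1-A)<1$ this forces $\esssup_\zeta\abs{1-\widehat{\phi}(\zeta)}<1$, and the first part applies. I do not anticipate any real obstacle: all the analytic work is already packaged in Proposition~\ref{pwn} (ultimately the perturbation theorem of \cite{Christ}), and the only point that needs a moment's care is interpreting the pointwise-a.e.\ hypothesis as the uniform bound that legitimately triggers that proposition.
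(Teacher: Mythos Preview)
Your argument for the first assertion is exactly the paper's one-line proof, written out in full: apply Proposition~\ref{pwn} with the generator $\phi=\text{sinc}$, using $\widehat{\text{sinc}}=\chi_{[-1/2,1/2)}$ to collapse the periodized series to the single term $\abs{1-\widehat{\psi}(\zeta)}^2$. Your observation that the pointwise-a.e.\ hypothesis must really be read as an essential-supremum bound in order to produce the constant $\mu$ required by Proposition~\ref{pwn} is correct and worth making explicit; the paper does not comment on this.

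There is, however, a genuine gap in your treatment of the ``in particular'' clause. You assert that $A<\abs{\widehat{\phi}(\zeta)}^2<B$ together with $\max(B-1,1-A)<1$ \emph{forces} $\esssup_\zeta\abs{1-\widehat{\phi}(\zeta)}<1$, but this implication fails for complex-valued $\widehat{\phi}$: the hypotheses constrain only $\abs{\widehat{\phi}}$, whereas $\abs{1-\widehat{\phi}}<1$ confines $\widehat{\phi}$ to the open disc of radius $1$ centred at $1$, which in particular requires $\mathrm{Re}\,\widehat{\phi}>0$. Concretely, $\widehat{\phi}\equiv-1$ on $[-1/2,1/2)$ (i.e.\ $\phi=-\text{sinc}$) has $\abs{\widehat{\phi}}^2\equiv1$, so any $0<A<1<B<2$ gives $\max(B-1,1-A)<1$, yet $\abs{1-\widehat{\phi}}\equiv2$. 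The deduction does go through under the additional assumption $\widehat{\phi}\geqslant0$ (then $\sqrt{A}<\widehat{\phi}<\sqrt{B}$ and $\abs{1-\widehat{\phi}}\leqslant\max(\sqrt{B}-1,\,1-\sqrt{A})<1$ since $B<2$ and $A>0$), which is presumably the intended reading; but as written this step is unjustified, and the paper's own proof offers nothing beyond ``follows from the previous result'', so you cannot look there for the missing argument.
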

\begin{proof}
This follows from the previous result and the fact that $\widehat{\text{sinc}}(\zeta)=\chi_{[-1/2,1/2]}(\zeta)$.
\end{proof}

As said before, analogous computations give the result for a SIS with a finite Riesz generator set. We omit the proof.

\begin{proposition}
Let $V\subset L^2(\R^d)$ a shift-invariant space with a finite Riesz generator set $\Phi=\{\phi_1, \dots, \phi_n\}\subset L^2(\R^d)$, with lower and upper Riesz bounds $A$ and $B$ respectively. Let $\Psi=\{\psi_1, \dots, \psi_n\}\subset V$ be another set of functions such that there exists a constant $\mu>0$ for which
\begin{equation*}
c^*G_{\Phi-\Psi}(\zeta)c\leqslant\mu<A<c^*G_\Phi(\zeta)c
\end{equation*}
holds for all $c\in\C^n\setminus\{0\}$ and a.e. $z\in[-1/2,1/2)^d$, where $\Phi-\Psi=\{\phi_1-\psi_1, \dots, \phi_n-\psi_n\}$. Then $\{t_k\psi_j:j\in\{1, \dots, n\},k\in\Z\}$ is a Riesz basis that is woven with $\{t_k\phi_j:j\in\{1, \dots, n\},k\in\Z^d\}$.
\end{proposition}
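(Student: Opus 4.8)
\emph{Proof proposal.} The plan is to follow, mutatis mutandis, the proof of Proposition~\ref{pwn}: replace the scalar periodization $\sum_k\abs{\widehat{\phi}(\zeta+k)-\widehat{\psi}(\zeta+k)}^2$ by the matrix-valued Gramian $G_{\Phi-\Psi}(\zeta)$, verify the perturbation hypothesis \eqref{Christensen} of \cite{Christ} for $\{t_k\phi_j\}$ versus $\{t_k\psi_j\}$ and for every weaving of the two families, and then upgrade the resulting woven frames to Riesz bases via \cite[Theorems~5.3 and 5.4]{weaving}.

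Concretely, I would fix a finitely supported scalar array $c=(c_{j,k})_{1\le j\le n,\ k\in\Z^d}$, introduce the trigonometric polynomials $p_j(\zeta):=\sum_k c_{j,k}e^{-2\pi i k\cdot\zeta}$ and the vector $p(\zeta):=(p_1(\zeta),\dots,p_n(\zeta))$, and compute by Plancherel's theorem
\[
\norm{\sum_{j=1}^n\sum_k c_{j,k}\bigl(t_k\phi_j-t_k\psi_j\bigr)}^2=\int_{\R^d}\abs{\sum_{j=1}^n p_j(\zeta)\bigl(\widehat{\phi_j}(\zeta)-\widehat{\psi_j}(\zeta)\bigr)}^2\d\zeta .
\]
Splitting $\R^d$ into the $\Z^d$-translates of $[-1/2,1/2)^d$ and using that each $p_j$ is $\Z^d$-periodic, the right-hand side becomes $\int_{[-1/2,1/2)^d}p(\zeta)^*G_{\Phi-\Psi}(\zeta)\,p(\zeta)\,\d\zeta$, exactly as in Proposition~\ref{pwn}. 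By the hypothesis (which, by homogeneity, reads $c^*G_{\Phi-\Psi}(\zeta)c\le\mu\norm{c}^2$ for every $c\in\C^n$ and a.e.\ $\zeta$) this is at most $\mu\int_{[-1/2,1/2)^d}\abs{p(\zeta)}^2\d\zeta$, and since $\{e^{-2\pi i k\cdot\zeta}\}_{k\in\Z^d}$ is an orthonormal basis of $L^2([-1/2,1/2)^d)$ the last integral equals $\norm{c}_{\ell^2}^2$. Thus \eqref{Christensen} holds with the constant $\mu$.

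Since $\{t_k\phi_j:1\le j\le n,\ k\in\Z^d\}$ is a Riesz basis of $V$ with lower bound $A$ and $\mu<A$, the perturbation theorem of \cite{Christ} yields that $\{t_k\psi_j\}$ is a frame of $V$. Because the array $c$ above was arbitrary, the identical estimate applies verbatim when $c$ is supported on any subset $I\subset\{1,\dots,n\}\times\Z^d$, so every weaving $\{t_k\psi_j\}_{(j,k)\in I}\cup\{t_k\phi_j\}_{(j,k)\notin I}$ also satisfies \eqref{Christensen} against $\{t_k\phi_j\}$ with the same $\mu$ and is therefore a frame of $V$. Hence $\{t_k\phi_j\}$ and $\{t_k\psi_j\}$ are woven frames; as the first family is a Riesz basis, \cite[Theorem~5.4]{weaving} promotes $\{t_k\psi_j\}$ to a Riesz basis and \cite[Theorem~5.3]{weaving} guarantees that every weaving choice is a Riesz basis as well.

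The only genuinely technical step — the one I would write out with care — is the fiberization identity that rewrites $\int_{\R^d}\abs{\sum_j p_j(\widehat{\phi_j}-\widehat{\psi_j})}^2$ as the quadratic form $\int_{[-1/2,1/2)^d}p^*G_{\Phi-\Psi}\,p$: one must interchange the sum over the periodization lattice with the integral and recognize $\sum_{m\in\Z^d}(\widehat{\phi_j}-\widehat{\psi_j})(\zeta+m)\overline{(\widehat{\phi_l}-\widehat{\psi_l})(\zeta+m)}$ as the $(j,l)$ entry of $G_{\Phi-\Psi}(\zeta)$. The interchange is justified exactly as in the one-generator case via dominated convergence, using the uniform upper Riesz bound $B$ to control $G_\Phi$ and the bound $\mu$ to control $G_{\Phi-\Psi}$ (hence also $G_\Psi$). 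Everything else is a direct transcription of the argument for Proposition~\ref{pwn}, now carried out with vectors and matrices in place of scalars.
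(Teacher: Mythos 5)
Your proposal is correct and follows exactly the route the paper intends: the paper omits this proof, stating only that it is analogous to Proposition~\ref{pwn}, and your argument is precisely that analogy carried out, replacing the scalar periodization by the quadratic form $p^*G_{\Phi-\Psi}p$ and finishing with the Christensen perturbation bound together with \cite[Theorems~5.3 and 5.4]{weaving}. Your reading of the hypothesis as $c^*G_{\Phi-\Psi}(\zeta)c\leqslant\mu\norm{c}^2$ by homogeneity, and the observation that the perturbation estimate applies verbatim to coefficient arrays supported on any subset $I$, are exactly the points that make the transcription go through.
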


\end{document}